\newtheorem{theorem}{Theorem}[section]
\newtheorem{lemma}[theorem]{Lemma}
\newtheorem{corollary}[theorem]{Corollary}
\theoremstyle{definition}
\newtheorem{definition}[theorem]{Definition}
\theoremstyle{remark}
\newtheorem{remark}[theorem]{Remark}
\numberwithin{equation}{section}
\begin{document}

\title{Projective Analytic Vectors and Infinitesimal Generators}

\author{Rodrigo A. H. M. Cabral}
\thanks{This work is part of a PhD thesis \cite{rodrigotese}, which was supported by CNPq (Conselho Nacional de Desenvolvimento Cient\'ifico e Tecnol\'ogico).}
\address{Departamento de Matem\'atica, Instituto de Matem\'atica e Estat\'istica, Universidade de S\~ao Paulo (IME-USP), BR-05508-090, S\~ao Paulo, SP, Brazil.}
\email{rahmc@ime.usp.br; rodrigoahmc@gmail.com}

\dedicatory{}

\commby{}

\begin{abstract}
We establish the notion of a ``\textit{projective analytic vector}'', whose defining requirements are weaker than the usual ones of an analytic vector, and use it to prove generation theorems for one-parameter groups on locally convex spaces. More specifically, we give a characterization of the generators of strongly continuous one-parameter groups which arise as the result of a projective limit procedure, in which the existence of a dense set of projective analytic vectors plays a central role. An application to strongly continuous Lie group representations on Banach spaces is given, with a focused analysis on concrete algebras of functions and of pseudodifferential operators.
\end{abstract}

\subjclass[2020]{Primary 47D03, 46M40; Secondary 47L60, 47G30, 35S05}

\keywords{analytic vector; one-parameter group; projective limit; locally convex space; pseudodifferential operator}

\maketitle

\section{Introduction}

In the Banach space context, there are very well-known generation theorems for one-parameter groups which rely on the hypothesis of existence of a dense set of analytic vectors. In Hilbert spaces, for example, E. Nelson proved in \cite[Lemma 5.1]{nelson} that a closed hermitian operator is self-adjoint if, and only if, it possesses a dense set of analytic vectors inside its domain -- generators of strongly continuous unitary one-parameter groups are all of the form $iT$, with $T$ a self-adjoint operator (see \cite[Theorem VIII.7, Theorem VIII.8]{reedsimon1}). J. Rusinek then improved this result in reference \cite{rusinek} to the Banach space context for an arbitrary strongly continuous one-parameter group -- see also \cite[Theorem 3.1.22]{bratteli1}.

In the paper \cite{magyarpaper}, Z. Magyar deals with generation theorems on locally convex spaces beyond the normed framework and provides, as a corollary, a classification of the generators of one-parameter groups with ``\textit{uniform exponential growth}'' -- see \cite[page 101]{magyarpaper}; see also \cite[Chapter 1]{magyarbook}. These groups are exactly the exponentially equicontinuous one-parameter groups which appear in Definition \ref{expequic}, below. In the present paper, we aim to characterize generators belonging to a different class of one-parameter groups: our groups are still of ``\textit{exponential growth}'', as defined on page 95 of reference \cite{magyarpaper}, but the uniformity requirement is relaxed. On the other hand, our groups, which were introduced by V.A. Babalola in \cite{babalola}, bear a strict relationship with the fixed fundamental system of seminorms generating the locally convex topology -- see Definition \ref{gamma}.

Our main goal is to prove new generation theorems in the locally convex realm with the aid of ``\textit{projective analytic vectors}'', a concept which fits very naturally into this framework. Then, a characterization of the generators of strongly continuous one-parameter groups belonging to a class defined by V.A. Babalola in reference \cite{babalola}, which includes all equicontinuous one-parameter groups, will be given in Theorem \ref{Theorem5}. Applications to equicontinuous one-parameter groups are given in Theorem \ref{Corollary6}, and an application to strongly continuous Lie group representations on Banach spaces is provided in Theorem \ref{examples}, with a subsequent investigation of certain algebras of functions and of pseudodifferential operators. In particular, we show that estimates which are similar to the ones provided in \cite[Proposition 4.2, page 262]{cordes} may be obtained, and with constants that do not depend on the dimension $n$ of the subjacent Euclidean space (the estimates in reference \cite{cordes} appear as a byproduct of Cordes' pseudodifferential classification of the smooth vectors under the action of the ($2n + 1$)-dimensional Heisenberg group by $*$-automorphisms). An analogous treatment is also given for the action of the $n$-dimensional torus. Our estimates highlight the strong interplay between the natural Fr\'echet topology of the algebra of smooth vectors and the partial derivatives of the symbols of these pseudodifferential operators.

\section{Preliminaries}

\textit{Some Conventions:} The notations $(\mathcal{X}, \Gamma)$ or $(\mathcal{X}, \tau)$ will frequently be used to indicate, respectively, that $\Gamma$ is a fundamental system of seminorms for the locally convex space $\mathcal{X}$ (in other words, a family of seminorms on $\mathcal{X}$ which generates its topology) or that $\tau$ is the locally convex topology of $\mathcal{X}$. The symbol $\mathcal{L}(\mathcal{X})$ will always denote the algebra of continuous linear operators defined on all of $\mathcal{X}$, and the term ``$\tau$-continuous'' will often be used to indicate that the continuity of an operator is related to the $\tau$ topology. A family of seminorms $\Gamma$ defined on a locally convex space $\mathcal{X}$ is said to be \textit{saturated} if, for any given finite subset $F$ of $\Gamma$, the seminorm defined by
\begin{equation*}p_F \colon x \longmapsto \max \left\{p(x): p \in F\right\}\end{equation*}
also belongs to $\Gamma$. A fundamental system of seminorms can always be enlarged to a saturated one by including the seminorms $p_F$, as defined above, in such a way that the resulting family generates the same topology. Hence, it will always be assumed in this paper that the families of seminorms to be considered are already saturated, whenever convenient, without further notice.

A linear operator $T$ on $\mathcal{X}$ will always be assumed as being defined on a vector subspace $\text{Dom }T$ of $\mathcal{X}$, called its \textit{domain}. When its domain is dense, the operator will be called \textit{densely defined}. Also, if $S$ and $T$ are two linear operators on $\mathcal{X}$, then their sum will be defined by 
\begin{equation*}\text{Dom }(S + T) := \text{Dom }S \cap \text{Dom }T, \quad (S + T)(x) := S(x) + T(x),\end{equation*} and their composition by
\begin{equation*}\text{Dom }(TS) := \left\{x \in \mathcal{X}: S(x) \in \text{Dom }T\right\}, \quad (TS)(x) := T(S(x)),\end{equation*} following the usual conventions of the classical theory of unbounded linear operators on Hilbert and Banach spaces. The \textit{range} of $T$ will be denoted by $\text{Ran }T$.

\begin{definition}[One-parameter Semigroups and Groups] \label{expequic}
A one-parameter semigroup on a Hausdorff locally convex space $(\mathcal{X}, \Gamma)$ is a family $\left\{V(t)\right\}_{t \geq 0}$ of linear operators in $\mathcal{L}(\mathcal{X})$ satisfying \begin{equation*}V(0) = I, \, V(s + t) = V(s)V(t), \qquad s, t \geq 0.\end{equation*} If, in addition, the semigroup satisfies the property that \begin{equation*}\lim_{t \rightarrow t_0} p(V(t)x - V(t_0)x) = 0, \qquad t_0 \geq 0, \, p \in \Gamma, \, x \in \mathcal{X},\end{equation*} then $\left\{V(t)\right\}_{t \geq 0}$ is called \textit{strongly continuous} or, more explicitly, a \textit{strongly continuous one-parameter semigroup}. Such a semigroup is called \textit{exponentially equicontinuous} \cite[Definition 2.1]{albanese} if there exists $a \geq 0$ satisfying the following property: for all $p \in \Gamma$, there exist $q \in \Gamma$ and $M_p > 0$ such that \begin{equation*}p(V(t)x) \leq M_p \, e^{at} q(x), \qquad t \geq 0, \, x \in \mathcal{X}.\end{equation*} If $a$ can be chosen equal to 0, such a semigroup will be called \textit{equicontinuous} \cite[page 234]{yosida}. A strongly continuous one-parameter semigroup is said to be \textit{locally equicontinuous} if, for every compact $K \subseteq [0, + \infty)$, the set $\left\{V(t): t \in K\right\}$ is equicontinuous, meaning that, for each $p \in \Gamma$, there exist $q \in \Gamma$ and $M_p > 0$ satisfying \begin{equation*} p(V(t)x) \leq M_p \, q(x), \qquad t \in K, \, x \in \mathcal{X}.\end{equation*} These definitions are analogous for one-parameter groups, switching from ``$\,t \geq 0$'' to ``$\,t \in \mathbb{R}$'', ``$\,e^{at}$'' to ``$\,e^{a|t|}$'' and ``$\,[0, + \infty)$'' to ``$\,\mathbb{R}$''.

In the group case, much of the above terminology can be adapted from one-parameter groups to general Lie groups. For example, if $G$ is a Lie group with unit $e$, then a family $\left\{V(g)\right\}_{g \in G}$ of linear operators in $\mathcal{L}(\mathcal{X})$ satisfying \begin{equation*}V(e) = I, \, V(gh) = V(g)V(h), \qquad g, h \in G\end{equation*} and \begin{equation*}\lim_{g \rightarrow h} V(g)x = V(h)x, \qquad x \in \mathcal{X}, \, h \in G,\end{equation*} is called a \textit{strongly continuous representation of $G$ on $\mathcal{X}$}. Such a group representation is called \textit{locally equicontinuous} if, for each compact $K \subseteq G$, the set $\left\{V(g): g \in K\right\}$ is equicontinuous.
\end{definition}

\begin{definition}[The Kernel Invariance Property (KIP)]
If $(\mathcal{X}, \Gamma)$ is a Hausdorff locally convex space, define for each $p \in \Gamma$ the closed subspace \begin{equation*}N_p := \left\{x \in \mathcal{X}: p(x) = 0\right\},\end{equation*} often referred to as the \textit{kernel} of the seminorm $p$, and the quotient map $\pi_p \colon \mathcal{X} \ni x \longmapsto [x]_p \in \mathcal{X}/N_p$. Then, $\mathcal{X}/N_p$ is a normed space with respect to the norm $\|[x]_p\|_p := p(x)$, and is not necessarily complete. Denote its completion by $\mathcal{X}_p := \overline{\mathcal{X}/N_p}$. A densely defined linear operator $T \colon \text{Dom }T \subseteq \mathcal{X} \longrightarrow \mathcal{X}$ is said to possess the \textit{kernel invariance property (KIP)} with respect to $\Gamma$ if it leaves their seminorms' kernels invariant, that is, \begin{equation*}T \, [\text{Dom }T \cap N_p] \subseteq N_p, \qquad p \in \Gamma\end{equation*} -- reference \cite{babalola} calls them ``compartmentalized operators''. If this property is fulfilled, then the linear operators \begin{equation*}T_p \colon \pi_p[\text{Dom }T] \subseteq \mathcal{X}_p \longrightarrow \mathcal{X}_p, \qquad T_p \colon [x]_p \longmapsto [T(x)]_p,\end{equation*} on the quotients are well-defined, and their domains are dense in $\mathcal{X}_p$.
\end{definition}

Next, we define the strongly continuous one-parameter (semi)groups which will be studied in this article.

\begin{definition}[$\Gamma$-semigroups and $\Gamma$-groups] \label{gamma}
Let $(\mathcal{X}, \Gamma)$ be a complete Hausdorff locally convex space. For each $p \in \Gamma$, define \begin{equation*}V_p := \left\{x \in \mathcal{X}: p(x) \leq 1\right\}.\end{equation*} Following \cite{babalola}, the following conventions will be used:

\begin{enumerate}

\item $\mathcal{L}_\Gamma(\mathcal{X}) \subseteq \mathcal{L}(\mathcal{X})$ denotes the family of continuous linear operators $A$ on $\mathcal{X}$ satisfying the property that, for all $p \in \Gamma$, there exists $\lambda(p, A) > 0$ such that \begin{equation*}A[V_p] \subseteq \lambda(p, A) \, V_p\end{equation*} or, equivalently, \begin{equation*}p(Ax) \leq \lambda(p, A) \, p(x), \qquad x \in \mathcal{X}.\end{equation*}

\item A strongly continuous one-parameter semigroup $t \longmapsto V(t)$ is said to be a $\Gamma$\textit{-semigroup} if, for each $p \in \Gamma$, there exist $M_p, \sigma_p \in \mathbb{R}$ such that $p(V(t)x) \leq M_p \, e^{\sigma_p t} p(x)$, for all $x \in \mathcal{X}$ and $t \geq 0$ -- in \cite{babalola} they are called (C$_0$, 1) semigroups; similarly for groups. $\Gamma$\textit{-groups} are defined in an analogous way, but with $p(V(t)x) \leq M_p \, e^{\sigma_p |t|} p(x)$, for all $x \in \mathcal{X}$ and $t \in \mathbb{R}$.

\end{enumerate}
\end{definition}

Note that the definition above automatically implies local equicontinuity of the one-parameter (semi)group $V$, and that the operators $V(t)$ have the (KIP) with respect to $\Gamma$. In particular, if $\left\{T_\alpha\right\}_{\alpha \in \mathcal{A}}$ is a net of linear operators having the (KIP) with respect to $\Gamma$, all defined on the same domain $\mathcal{D} \subseteq \mathcal{X}$, such that $T_\alpha(x)$ converges to $T(x)$, for all $x \in \mathcal{D} \subseteq \text{Dom }T$, then $T|_\mathcal{D}$ also has the (KIP) with respect to $\Gamma$. Since, by definition, the generator $T$ of a $\Gamma$-(semi)group $t \longmapsto V(t)$ satisfies \begin{equation*} \lim_{t \rightarrow 0} \frac{V(t)x - x}{t} = Tx,\end{equation*} for all $x \in \text{Dom }T$ (see Definition \ref{smooth}, below), it follows that $T$ has the (KIP) with respect to $\Gamma$.

The definitions of $\Gamma$-semigroups and $\Gamma$-groups given here are equivalent to the ones given in \cite[Definitions 2.1 and 2.2]{babalola}, as is proved in \cite[Theorem 2.6]{babalola} (note that a ``local equicontinuity-type'' requirement which is present in \cite[Definition 2.1]{babalola} is missing in \cite[Definition 2.2]{babalola}). Exploring the (KIP), the author of \cite{babalola} associates to the $\Gamma$-semigroup $t \longmapsto V(t)$ a family of strongly continuous semigroups $\left\{\tilde{V}_p\right\}_{p \in \Gamma}$, where $\tilde{V}_p$ is defined on the Banach space $(\mathcal{X}_p, \|\, \cdot \,\|_p)$. For each $p \in \Gamma$, the \textit{type} of $\tilde{V}_p$ \cite[page 306]{hille}, denoted by $w_p$, is defined as \begin{equation*}w_p := \inf_{t > 0} \, \frac{1}{t} \, \text{log }\|\tilde{V}_p(t)\|_p,\end{equation*} and may be a real number or $- \infty$ \cite[pages 40, 251]{engel}, \cite[Theorem 7.6.1]{hille}. The family $\left\{w_p\right\}_{p \in \Gamma}$ of elements in the extended real line is called \textit{the type of $V$}. Following \cite[page 170]{babalola}, if the supremum $w := \sup_{p \in \Gamma} w_p$ is a real number, then $V$ is said to be of \textit{bounded type} $w$ -- analogously, substituting ``$t > 0$'' by ``$t \in \mathbb{R} \backslash \left\{0\right\}$'' and ``$1/t$'' by ``$1/|t|$'', one defines $\Gamma$-groups of bounded type.

A nice aspect of $\Gamma$-semigroups of bounded type $w$ is that they satisfy the resolvent formula \begin{equation*}\text{R}(\lambda, T)x = \int_0^{+ \infty} e^{-\lambda t} V(t)x \, dt, \qquad x \in \mathcal{X},\end{equation*} for all $\lambda > w$, where $T$ is the infinitesimal generator of $V$, as is shown in \cite[Theorem 3.3]{babalola}.

Note that, unlike exponentially equicontinuous (semi)groups, the definitions of $\Gamma$-semigroups and $\Gamma$-groups \textit{depend} on the choice of the fundamental system of seminorms $\Gamma$.

Another very important concept is that of a resolvent set, so let $T \colon \text{Dom }T \subseteq \mathcal{X} \longrightarrow \mathcal{X}$ be a linear operator. Then, the set \begin{equation*}\rho(T) := \left\{\lambda \in \mathbb{C}: (\lambda I - T) \text{ is bijective and } (\lambda I - T)^{-1} \in \mathcal{L}(\mathcal{X})\right\}\end{equation*} is called the \textit{resolvent set} of the operator $T$ (note that it may happen that $\rho(T) = \emptyset$, or even $\rho(T) = \mathbb{C}$), and the map \begin{equation*}\text{R}(\, \cdot \, , T) \colon \rho(T) \ni \lambda \longmapsto \text{R}(\lambda, T) := (\lambda I - T)^{-1} \in \mathcal{L}(\mathcal{X})\end{equation*} is called the \textit{resolvent of $T$}. When $\lambda \in \rho(T)$, it will be said that \textit{the resolvent operator of $T$ exists at $\lambda$}, and $\text{R}(\lambda, T)$ is called the \textit{resolvent operator of $T$ at $\lambda$} or the \textit{$\lambda$-resolvent of $T$}. Finally, the complementary set \begin{equation*}\sigma(T) := \mathbb{C} \, \backslash \rho(T)\end{equation*} is called the \textit{spectrum} of the operator $T$.

\begin{definition}[Infinitesimal Generators, Smooth and Analytic Vectors] \label{smooth}
Let $t \longmapsto V(t)$ be a strongly continuous one-parameter semigroup on the Hausdorff locally convex space $(\mathcal{X}, \Gamma)$ and consider the subspace of vectors $x \in \mathcal{X}$ such that the limit \begin{equation*}\lim_{t \rightarrow 0} \frac{V(t)x - x}{t}\end{equation*} exists. Then, the linear operator $T$ defined by \begin{equation*}\text{Dom }T := \left\{x \in \mathcal{X}: \lim_{t \rightarrow 0} \frac{V(t)x - x}{t} \text{ exists in } \mathcal{X}\right\}\end{equation*} and $T(x) := \lim_{t \rightarrow 0} \frac{V(t)x - x}{t}$ is called the \textit{infinitesimal generator} (or, simply, the generator) of the semigroup $t \longmapsto V(t)$ (the definition for groups is analogous). Also, if $G$ is a Lie group and $g \longmapsto V(g)$ is a strongly continuous representation of $G$ on $\mathcal{X}$ then, for each fixed element $X$ of its Lie algebra $\mathfrak{g}$, the infinitesimal generator of the one-parameter group $t \longmapsto V(\exp tX)$ will be denoted by $dV(X)$ ($\exp$ denotes the exponential map of the Lie group $G$). A vector $x \in \mathcal{X}$ is called a \textit{C$^\infty$ vector for $V$, or a smooth vector for $V$}, if the map $G \ni g \longmapsto V(g)x$ is of class $C^\infty$: a map $f \colon G \longrightarrow \mathcal{X}$ is of class $C^\infty$ at $g \in G$ if it possesses continuous partial derivatives of all orders with respect to a chart around $g$. The subspace of smooth vectors for $V$ will be denoted by $C^\infty(V)$. Moreover, following \cite[page 54]{moore}, we will say that a vector $x \in \mathcal{X}$ is \textit{analytic for $V$} if $x \in C^\infty(V)$ and the map $F_x \colon G \ni g \longmapsto V(g)x$ is analytic: in other words, if $x \in C^\infty(V)$ and, for each $g \in G$ and every analytic chart $h \colon g' \longmapsto (t_k(g'))_{1 \leq k \leq d}$ around $g$ sending it to 0 there exists $r_x > 0$ such that \begin{equation*}\sum_{\alpha \in \mathbb{N}^d} \frac{p(\partial^\alpha (F_x \circ h^{-1})(0))}{\alpha!} \, |t(g')^\alpha| < + \infty\end{equation*} and \begin{equation*} \sum_{\alpha \in \mathbb{N}^d} \frac{\partial^\alpha (F_x \circ h^{-1})(0)}{\alpha!} \, t(g')^\alpha = F_x(g'), \end{equation*} for every $p \in \Gamma$, whenever $|t(g')| < r_x$, where \begin{equation*} t(g')^\alpha = t_1(g')^{\alpha_1} \ldots t_d(g')^{\alpha_d} \quad \text{and} \quad |t(g')| := \max_{1 \leq k \leq d} |t_k(g')| \end{equation*} -- note that $r_x$ is independent of $p \in \Gamma$. The subspace of analytic vectors for $V$ will be denoted by $C^\omega(V)$. If $\tau$ is the topology defined by $\Gamma$, then the elements of $C^\omega(V)$ will sometimes be called \textit{$\tau$-analytic}.

Now, let $T$ be a linear operator on $\mathcal{X}$. An element $x \in \mathcal{X}$ is called a \textit{$\tau$-analytic vector for $T$} if \begin{equation*}x \in C^\infty(T) := \bigcap_{n = 1}^{+ \infty} \text{Dom }T^n\end{equation*} and there exists $r_x > 0$ such that \begin{equation*}\sum_{n \geq 0} \frac{p\,(T^n(x))}{n!} |u|^n < \infty, \qquad |u| < r_x,\end{equation*} for every $p \in \Gamma$. The subspace of analytic vectors for $T$ will be denoted by $C^\omega(T)$. See also the definition of ``\textit{$s$-analytic vectors}'' of an operator on page 94 of reference \cite{magyarpaper}.
\end{definition}

A linear operator $T \colon \text{Dom T} \subseteq \mathcal{X} \longrightarrow \mathcal{X}$ on the Hausdorff locally convex space $\mathcal{X}$ is \textit{closed} if its graph is a closed subspace of $\mathcal{X} \times \mathcal{X}$; an operator $S \colon \text{Dom S} \subseteq \mathcal{X} \longrightarrow \mathcal{X}$ is \textit{closable} if it has a closed extension or, equivalently, if for every net $\left\{x_\alpha\right\}_{\alpha \in \mathcal{A}}$ in $\text{Dom }S$ such that $x_\alpha \longrightarrow 0$ and $S(x_\alpha) \longrightarrow y$, one has $y = 0$. If $S$ is closable, then it has a minimal closed extension, called the \textit{closure} of $S$ and denoted by $\overline{S}$. Two very important results regarding infinitesimal generators on locally convex spaces are Propositions 1.3 and 1.4 of \cite{komura}, which prove that infinitesimal generators of strongly continuous locally equicontinuous one-parameter semigroups on sequentially complete locally convex spaces are densely defined and closed.

\begin{definition}[Cores]
If $T \colon \text{Dom }T \subseteq \mathcal{X} \longrightarrow \mathcal{X}$ is a closed linear operator on the Hausdorff locally convex space $\mathcal{X}$ and $\mathcal{D} \subseteq \text{Dom }T$ is a linear subspace of $\text{Dom }T$ such that \begin{equation*}\overline{T|_\mathcal{D}} = T,\end{equation*} then $\mathcal{D}$ is called a \textit{core} for $T$.
\end{definition}

Using the argument at the end of the proof of \cite[Lemma 4]{magyarpaper} we arrive at the following lemma:

\begin{lemma}\label{Lemma}
Let $t \longmapsto V(t)$ be a strongly continuous locally equicontinuous one-parameter group on a complete Hausdorff locally convex space $(\mathcal{X}, \Gamma)$ having $T$ as its generator. Suppose $\mathcal{D}$ is a dense subspace of $\mathcal{X}$ which is contained in $C^\infty(V)$ and is group invariant, that is, \begin{equation*}V(t)[\mathcal{D}] \subseteq \mathcal{D}, \qquad t \in \mathbb{R}.\end{equation*} If $n \in \mathbb{N}$ and $T^n$, $T^n|_\mathcal{D}$ are both closable operators, then $\overline{T^n} = \overline{T^n|_\mathcal{D}}$.
\end{lemma}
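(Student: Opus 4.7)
The inclusion $\overline{T^n|_\mathcal{D}} \subseteq \overline{T^n}$ is immediate from $T^n|_\mathcal{D} \subseteq T^n$, so the content is the reverse inclusion. My plan is to show that every $x \in \text{Dom }T^n$ can be approached in the graph topology of $T^n$ by a net in $\mathcal{D}$. Following the argument at the end of \cite[Lemma 4]{magyarpaper}, this is accomplished by combining a convolution mollifier, density of $\mathcal{D}$, and discretization by Riemann sums, with the group invariance $V(t)[\mathcal{D}] \subseteq \mathcal{D}$ guaranteeing that these sums stay in $\mathcal{D}$.

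Fix $\phi \in C_c^\infty(\mathbb{R})$ with $\int \phi = 1$ and set $\phi_\epsilon(t) := \epsilon^{-1} \phi(t/\epsilon)$. For $y \in \mathcal{X}$, the $\mathcal{X}$-valued Riemann integral
\begin{equation*}
M_\epsilon y := \int_\mathbb{R} \phi_\epsilon(t) V(t) y \, dt
\end{equation*}
exists by completeness of $\mathcal{X}$, compact support of $\phi_\epsilon$, and continuity of $t \mapsto V(t)y$. Computing the difference quotient $h^{-1}(V(h) - I) M_\epsilon y$ via a substitution $s = t + h$ transfers the derivative onto $\phi_\epsilon$, giving, upon iteration, the identity
\begin{equation*}
T^k M_\epsilon y = (-1)^k \int_\mathbb{R} \phi_\epsilon^{(k)}(t) V(t) y \, dt, \qquad y \in \mathcal{X}, \; k \in \mathbb{N},
\end{equation*}
so in particular $M_\epsilon y \in C^\infty(T)$. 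For $y \in \text{Dom }T^n$ one may alternatively commute $T^n$ past $V(t)$ to obtain $T^n M_\epsilon y = \int \phi_\epsilon(t) V(t) T^n y \, dt$; applied to $y = x$, the approximate-identity property then yields $M_\epsilon x \to x$ and $T^n M_\epsilon x \to T^n x$ as $\epsilon \to 0^+$.

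Next, choose by density a net $\{y_\alpha\} \subseteq \mathcal{D}$ with $y_\alpha \to x$. Local equicontinuity of $V$ on the (compact) supports of $\phi_\epsilon$ and $\phi_\epsilon^{(n)}$ gives, for each fixed $\epsilon$, both $M_\epsilon y_\alpha \to M_\epsilon x$ and $T^n M_\epsilon y_\alpha \to T^n M_\epsilon x$ as $\alpha$ advances, using the two integral expressions for $T^n M_\epsilon y_\alpha$ and $T^n M_\epsilon x$ respectively. Finally, discretize via Riemann sums
\begin{equation*}
S_{\epsilon,\alpha,\pi} := \sum_i \phi_\epsilon(t_i) V(t_i) y_\alpha \, \Delta t_i,
\end{equation*}
which lie in $\mathcal{D}$ by group invariance. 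As the partition $\pi$ of $\text{supp}(\phi_\epsilon)$ refines, $S_{\epsilon,\alpha,\pi} \to M_\epsilon y_\alpha$, and since $y_\alpha \in C^\infty(V)$ one has $T^n V(t_i) y_\alpha = V(t_i) T^n y_\alpha$, so $T^n S_{\epsilon,\alpha,\pi} = \sum_i \phi_\epsilon(t_i) V(t_i) T^n y_\alpha \, \Delta t_i \to T^n M_\epsilon y_\alpha$ as well. A diagonal extraction in the three parameters $(\epsilon, \alpha, \pi)$, exploiting saturation of $\Gamma$ to handle finitely many seminorms simultaneously, produces the required net in $\mathcal{D}$ converging to $x$ in the graph topology of $T^n$.

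The main obstacle is coordinating these three successive limits in the absence of a norm; this is precisely where local equicontinuity of $V$ on each compact interval enters, making the relevant estimates uniform on any single seminorm $p \in \Gamma$ in terms of a larger seminorm $q \in \Gamma$. Closability of $T^n$ and $T^n|_\mathcal{D}$ is invoked only at the end, to render $\overline{T^n}$ and $\overline{T^n|_\mathcal{D}}$ meaningful operators whose equality is the conclusion of the lemma.
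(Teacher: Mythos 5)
Your proof is correct and follows essentially the same route the paper takes, namely the G\aa rding-type mollification argument from the end of the proof of Magyar's Lemma 4: regularize with $M_\epsilon$, use the $(-1)^n\int\phi_\epsilon^{(n)}(t)V(t)y\,dt$ formula so that convergence $y_\alpha\to x$ in $\mathcal{X}$ alone controls $T^nM_\epsilon y_\alpha$, and discretize by Riemann sums that stay in $\mathcal{D}$ by group invariance. One cosmetic simplification: instead of a three-parameter diagonal extraction, you can apply the closedness of $\overline{T^n|_\mathcal{D}}$ three times in succession (once per limit) to conclude $x\in\mathrm{Dom}\,\overline{T^n|_\mathcal{D}}$ with $\overline{T^n|_\mathcal{D}}x=T^nx$.
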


A corollary of Lemma \ref{Lemma} is that, under these hypotheses, $\mathcal{D}$ is a core for the generator $T$. An analogous lemma is also available for one-parameter semigroups.

Lemma \ref{Lemma} will be used in the proof of Theorem \ref{Theorem1}.

\begin{definition}[Projective Limits]
Let $\left\{\mathcal{X}_i\right\}_{i \in I}$ be a family of Hausdorff locally convex spaces, where $I$ is a directed set under the partial order $\preceq$ and, for all $i, j \in I$ satisfying $i \preceq j$, suppose that there exists a continuous linear map $\mu_{ij} \colon \mathcal{X}_j \longrightarrow \mathcal{X}_i$ satisfying $\mu_{ij} \circ \mu_{jk} = \mu_{ik}$, whenever $i \preceq j \preceq k$ ($\mu_{ii}$ is, by definition, the identity map, for all $i \in I$). Then, $(\mathcal{X}_i, \mu_{ij}, I)$ is called a \textit{projective system (or an inverse system)} of Hausdorff locally convex spaces. Consider the vector space $\mathcal{X}$ defined by \begin{equation*}\mathcal{X} := \left\{(x_i)_{i \in I} \in \prod_{i \in I} \mathcal{X}_i: \mu_{ij}(x_j) = x_i, \text{ for all }i, j \in I \text{ with }i \preceq j\right\}\end{equation*} and equipped with the relative Tychonoff's product topology or, equivalently, the coarsest topology for which every canonical projection $\pi_j \colon (x_i)_{i \in I} \longmapsto x_j$ is continuous, relativized to $\mathcal{X}$. Then, $\mathcal{X}$ is a Hausdorff locally convex space, and is called the \textit{projective limit (or inverse limit)} of the family $\left\{\mathcal{X}_i\right\}_{i \in I}$. In this case, the notation $\varprojlim \mathcal{X}_i := \mathcal{X}$ is employed.
\end{definition}

Any complete Hausdorff locally convex space $(\mathcal{X}, \Gamma)$ is isomorphic to the projective limit of Banach spaces $\varprojlim \mathcal{X}_p$, so they will be treated as if they were the same space, in the proofs.

Now, suppose there are linear operators $T_i \colon \text{Dom }T_i \subseteq \mathcal{X}_i \longrightarrow \mathcal{X}_i$ which are connected by the relations $\mu_{ij} [\text{Dom }T_j] \subseteq \text{Dom }T_i$ and $T_i \circ \mu_{ij} = \mu_{ij} \circ T_j$, whenever $i \preceq j$. Then, the family $\left\{T_i\right\}_{i \in I}$ is said to be a \textit{projective family of linear operators}. The latter relation ensures that the linear transformation $T$ defined on $\text{Dom }T := \varprojlim \text{Dom }T_i \subseteq \varprojlim \mathcal{X}_i$ by $T(x_i)_{i \in I} := (T_i(x_i))_{i \in I}$ has its range inside $\varprojlim \mathcal{X}_i$, thus defining a linear operator on $\varprojlim \mathcal{X}_i$. This operator is called the \textit{projective limit of $\left\{T_i\right\}_{i \in I}$}, as in \cite[page 167]{babalola}.

\section{Main results}

The next task will be to define projective analytic vectors on locally convex spaces, so that some useful theorems become available:

\begin{definition}[Projective Analytic Vectors]
Let $(\mathcal{X}, \tau)$ be a Hausdorff locally convex space with a fundamental system of seminorms $\Gamma$ and $T$ be a linear operator defined on $\mathcal{X}$. An element $x \in \mathcal{X}$ is called a \textit{$\tau$-projective analytic vector for $T$} if $x \in C^\infty(T)$ and, for every $p \in \Gamma$, there exists $r_{x, p} > 0$ such that \begin{equation*}\sum_{n \geq 0} \frac{p\,(T^n(x))}{n!} |u|^n < \infty, \qquad |u| < r_{x, p}.\end{equation*}
\end{definition}

Note that, for this definition to make sense, it is necessary to show that it does not depend on the choice of the particular system of seminorms: if $x$ is $\tau$-projective analytic with respect to $\Gamma$ and $\Gamma'$ is another saturated family of seminorms generating the topology of $\mathcal{X}$ then, for each $q' \in \Gamma'$, there exists $C_{q'} > 0$ and $q \in \Gamma$ such that $q'(y) \leq C_{q'} \, q(y)$, for all $y \in \mathcal{X}$. Therefore, making $r_{x, q'} := r_{x, q}$, one obtains for every $u \in \mathbb{C}$ satisfying $|u| < r_{x, q'}$ that \begin{equation*}\sum_{n \geq 0} \frac{q'(T^n(x))}{n!} |u|^n \leq C_{q'} \sum_{n \geq 0} \frac{q \,(T^n(x))}{n!} |u|^n < \infty.\end{equation*} By symmetry, the assertion is proved. This motivates the use of the notation ``$\tau$-projective analytic'' to indicate that the analytic vector in question is related to the topology $\tau$ (when there is no danger of confusion, the symbol $\tau$ will be omitted). The subspace formed by all of the projective analytic vectors for $T$ is going to be denoted by $C^\omega_\leftarrow(T)$. The prefix ``projective'' stands for the fact that $C^\omega_\leftarrow(T)$ can be seen as a dense subspace of $\varprojlim \pi_p[C^\omega_\leftarrow(T)]$ via the canonical map $x \longmapsto ([x]_p)_{p \in \Gamma}$ and, if $T$ has the (KIP) with respect to $\Gamma$, then $\pi_p[C^\omega_\leftarrow(T)]$ consists entirely of analytic vectors for $T_p$, for every $p \in \Gamma$ -- note that the projective limit is well-defined, since the family $\left\{\pi_p[C^\omega_\leftarrow(T)]\right\}_{p \in \Gamma}$ gives rise to a canonical projective system.

Differently of what is required for analytic vectors, \textit{no uniformity in $p$} is asked in the above definition. Hence, the requirements just imposed are weaker that the usual ones, so the subspace of $\tau$-analytic vectors $C^\omega(T)$ satisfies the inclusion $C^\omega(T) \subseteq C^\omega_\leftarrow(T)$.

\begin{theorem}\label{Theorem1}
Let $(\mathcal{X}, \Gamma)$ be a complete Hausdorff locally convex space and $T$ be a linear operator on $\mathcal{X}$ having the (KIP) with respect to $\Gamma$. Suppose that $T$ has a dense set of projective analytic vectors and that, for each $p \in \Gamma$, there exist two real numbers $\sigma_p \geq 0$ and $M_p \geq 1$ satisfying \begin{equation} \label{general} p((\lambda I - T)^n x) \geq M_p^{-1} (|\lambda| - \sigma_p)^{n} p(x), \qquad x \in \text{Dom }T^n,\end{equation} for all $|\lambda| > \sigma_p$ and $n \in \mathbb{N}$. Then, $T$ is closable and $\overline{T}$ is the generator of a $\Gamma$-group $t \longmapsto V(t)$ satisfying $p(V(t)x) \leq M_p \, e^{\sigma_p |t|} p(x)$, for all $p \in \Gamma$, $x \in \mathcal{X}$ and $t \in \mathbb{R}$.
\end{theorem}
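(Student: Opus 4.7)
The plan is to reduce to the Banach-space setting through the kernel invariance property, apply a classical Banach-space generation theorem on each quotient, reassemble into a $\Gamma$-group on the projective limit, and finally identify $\overline{T}$ as the generator using the saturation of $\Gamma$. For each $p \in \Gamma$, the (KIP) provides a quotient operator $T_p \colon \pi_p[\text{Dom }T] \subseteq \mathcal{X}_p \longrightarrow \mathcal{X}_p$, $[x]_p \longmapsto [Tx]_p$, and the hypothesis \eqref{general} descends verbatim to
\begin{equation*}
\|(\lambda I - T_p)^n [x]_p\|_p \geq M_p^{-1}(|\lambda| - \sigma_p)^n \|[x]_p\|_p, \qquad |\lambda| > \sigma_p,\ n \in \mathbb{N}.
\end{equation*}
If $\mathcal{D}$ denotes the given dense set of projective analytic vectors, then $\pi_p[\mathcal{D}]$ is dense in $\mathcal{X}_p$ and, by the very definition of a projective analytic vector, each of its elements is analytic for $T_p$ in the Banach space $\mathcal{X}_p$. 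Hence Rusinek's Banach-space theorem \cite[Theorem 3.1.22]{bratteli1} applies and produces: $T_p$ is closable, $\overline{T_p}$ generates a $C_0$-group $V_p$ on $\mathcal{X}_p$ with $\|V_p(t)\|_p \leq M_p e^{\sigma_p|t|}$, and $\pi_p[\mathcal{D}]$ is a core for $\overline{T_p}$.

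For $p \leq q$ in $\Gamma$ (pointwise), the intertwining $\mu_{p,q} \circ T_q = T_p \circ \mu_{p,q}$ produced by the (KIP) extends by continuity to $\mu_{p,q} \overline{T_q} = \overline{T_p} \mu_{p,q}$ and hence to the generated groups: $\mu_{p,q} V_q(t) = V_p(t) \mu_{p,q}$. Via $\mathcal{X} \cong \varprojlim \mathcal{X}_p$, the family $\{V_p(t)\}_{p \in \Gamma}$ then assembles into a linear operator $V(t) \in \mathcal{L}(\mathcal{X})$. The group law, strong continuity of $t \longmapsto V(t)x$, and the estimate $p(V(t)x) \leq M_p e^{\sigma_p|t|}p(x)$ are coordinate-wise consequences of the corresponding properties of each $V_p$, so $V$ is a $\Gamma$-group of the claimed growth.

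Let $S$ denote its generator; by the construction, $S$ is precisely the projective limit of $\{\overline{T_p}\}_{p \in \Gamma}$. Closability of $T$ comes for free: if $x_\alpha \to 0$ and $Tx_\alpha \to y$ in $\mathcal{X}$, then $T_p \pi_p x_\alpha = \pi_p T x_\alpha \to \pi_p y$ with $\pi_p x_\alpha \to 0$, so closedness of $\overline{T_p}$ forces $\pi_p y = 0$ for every $p$, hence $y = 0$. The inclusion $\overline{T} \subseteq S$ is immediate from the projective description: for $x \in \text{Dom }T$ the family $(\pi_p x)_p$ lies in $\text{Dom }\overline{T_p}$ with $\overline{T_p}\pi_p x = \pi_p Tx$, so $x \in \text{Dom }S$ with $Sx = Tx$, and $S$ is closed.

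The reverse inclusion $S \subseteq \overline{T}$ is the heart of the argument and is where the flexibility of the projective-analytic notion pays off. Given $y \in \text{Dom }S$, a finite $F \subseteq \Gamma$, and $\epsilon > 0$, set $p_F := \max\{p : p \in F\} \in \Gamma$. Since $\pi_{p_F} y \in \text{Dom }\overline{T_{p_F}}$ with $\overline{T_{p_F}}\pi_{p_F}y = \pi_{p_F}Sy$, and $\pi_{p_F}[\mathcal{D}]$ is a core for $\overline{T_{p_F}}$, one can choose $x_{F,\epsilon} \in \mathcal{D}$ so that $p_F(x_{F,\epsilon} - y) < \epsilon$ and $p_F(Tx_{F,\epsilon} - Sy) < \epsilon$; saturation propagates these to $p(x_{F,\epsilon} - y) < \epsilon$ and $p(Tx_{F,\epsilon} - Sy) < \epsilon$ for every $p \in F$. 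The net $\{x_{F,\epsilon}\}$, directed by refinement of $F$ and shrinking of $\epsilon$, therefore converges to $y$ in $\mathcal{X}$ with $Tx_{F,\epsilon} \to Sy$, forcing $y \in \text{Dom }\overline{T}$ and $\overline{T}y = Sy$. The main obstacle lies in dovetailing the three ingredients---the Banach-level core property, the projective compatibility of the $V_p$'s, and the saturation argument---so that the absence of a uniform radius of analyticity never blocks the approximation; this is precisely what the projective-analytic notion was designed to enable.
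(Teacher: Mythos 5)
Your proposal is correct and its skeleton coincides with the paper's: pass to the Banach quotients $\mathcal{X}_p$ via the (KIP), check that projective analytic vectors project to genuine analytic vectors for $T_p$ and that the estimates \eqref{general} descend, invoke Rusinek's theorem on each $\mathcal{X}_p$, and reassemble the groups $\tilde V_p$ into a $\Gamma$-group whose generator is $\varprojlim \overline{T_p}$ (the paper outsources the projective compatibility and the identification of the limit generator to Babalola's Theorems 4.2 and 2.5, which is exactly the resolvent/exponential-formula argument you sketch). Where you genuinely diverge is the endgame, the identification $\varprojlim \overline{T_p} \subseteq \overline{T}$: the paper observes that $V$ leaves the dense subspace $C^\omega_\leftarrow(T) \subseteq C^\infty(\tilde T)$ invariant and applies its Lemma \ref{Lemma} to conclude that $C^\omega_\leftarrow(T)$ is a core for $\tilde T$, whereas you work at the Banach level, using that $\pi_{p_F}[\mathcal{D}]$ is a core for $\overline{T_{p_F}}$ and then gluing the quotient approximations into a single net via saturation. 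Your route avoids having to verify the group-invariance of $C^\omega_\leftarrow(T)$ (a point the paper asserts rather tersely), at the cost of relying on the core statement for $\pi_p[\mathcal{D}]$; that statement is not always quoted as part of Rusinek's theorem, but it does follow immediately from it by maximality of generators ($\overline{T_p|_{\pi_p[\mathcal{D}]}}$ and $\overline{T_p}$ both generate groups satisfying the same resolvent injectivity estimates, and one extends the other, so they coincide), so you should make that one-line argument explicit rather than attributing it to the citation. With that small addition the proof is complete.
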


\begin{proof}
As a consequence of the (KIP) combined with the density hypothesis, the linear operator $T_p$ induced on the quotient $\mathcal{X}/N_p$ possesses a dense subspace $\pi_p[C^\omega_\leftarrow(T)]$ of analytic vectors, for each fixed $p \in \Gamma$. Hence, combining the estimates (\ref{general}) with \cite[Theorem 1, Theorem 2]{rusinek} guarantees that $T_p$ is closable and $\overline{T_p}$ is the generator of a strongly continuous one-parameter group $t \longmapsto \tilde{V}_p(t)$ on $\mathcal{X}_p$, for all $p \in \Gamma$. Moreover, there exist numbers $\sigma_p \geq 0$ and $M_p \geq 1$ satisfying the inequality $\|\tilde{V}_p(t)[x]_p\|_p \leq M_p \, e^{\sigma_p |t|} \|[x]_p\|_p$, for all $p \in \Gamma$, $x \in \mathcal{X}$ and $t \in \mathbb{R}$. The arguments in the proof of \cite[Theorem 4.2]{babalola} show that $\left\{\tilde{V}_p(t)\right\}_{p \in \Gamma}$ is a projective family of operators, for each fixed $t \in \mathbb{R}$. Defining $V(t)$ as the projective limit of $\left\{\tilde{V}_p(t)\right\}_{p \in \Gamma}$ yields a $\Gamma$-group $t \longmapsto V(t)$ on $\mathcal{X}$, and the projective limit $\tilde{T}$ of the projective family $\left\{\overline{T_p}\right\}_{p \in \Gamma}$ is the generator of $t \longmapsto V(t)$, by \cite[Theorem 2.5]{babalola} (adapted to one-parameter groups). Note that $T \subset \tilde{T}$. In particular, this shows that $T$ is closable and \begin{equation*}\overline{T} \subset \tilde{T},\end{equation*} since $\tilde{T}$ is closed. Therefore, since $V$ leaves the dense subspace $C^\omega_\leftarrow(T) \subseteq C^\infty(\tilde{T})$ invariant, it follows from Lemma \ref{Lemma} above that it is a core for $\tilde{T}$, so \begin{equation*}\tilde{T} = \overline{\tilde{T}|_{C^\omega_\leftarrow(T)}} = \overline{T|_{C^\omega_\leftarrow(T)}} \subset \overline{T}.\end{equation*} This establishes the result.
\end{proof}

\begin{remark}
Note that it was also proved that, under the hypotheses of Theorem \ref{Theorem1}, $C^\omega_\leftarrow(T)$ is a core for $\overline{T}$.
\end{remark}

\begin{remark}\label{corollarysemi}
A similar theorem for $\Gamma$-semigroups is also true: if one substitutes the estimates (\ref{general}) by $p((\lambda I - T)^n x) \geq M_p^{-1} (\lambda - \sigma_p)^{n} p(x)$, for all $x \in \text{Dom }T^n$, $\lambda > \sigma_p$ and $n \in \mathbb{N}$, then it follows that $T$ is closable and $\overline{T}$ is the generator of a $\Gamma$-semigroup $t \longmapsto V(t)$ satisfying $p(V(t)x) \leq M_p \, e^{\sigma_p t} p(x)$, for all $p \in \Gamma$, $x \in \mathcal{X}$ and $t \geq 0$. To reach this conclusion, two adaptations in the proof of Theorem \ref{Theorem1} must be done: (i) by invoking \cite[Theorem 1]{magyarpaper}, instead of \cite[Theorem 1, Theorem 2]{rusinek}, one concludes that each $\overline{T_p}$ is the generator of a strongly continuous one-parameter semigroup on $\mathcal{X}_p$; (ii) then, using a semigroup version of Lemma \ref{Lemma}, one finally reaches the conclusion that $\overline{T}$ is the generator of a $\Gamma$-semigroup on $\mathcal{X}$.
\end{remark}

\begin{corollary}\label{Corollary2}
Let $(\mathcal{X}, \Gamma)$ be a complete Hausdorff locally convex space and $T$ be a linear operator on $\mathcal{X}$ having the (KIP) with respect to $\Gamma$. Suppose that $T$ has a dense set of analytic vectors and that, for each $p \in \Gamma$, there exist two real numbers $\sigma_p \geq 0$ and $M_p \geq 1$ satisfying \begin{equation*}p((\lambda I - T)^n x) \geq M_p^{-1} (|\lambda| - \sigma_p)^{n} p(x), \qquad x \in \text{Dom }T^n,\end{equation*} for all $|\lambda| > \sigma_p$ and $n \in \mathbb{N}$. Then, $T$ is closable and $\overline{T}$ is the generator of a $\Gamma$-group $t \longmapsto V(t)$ satisfying $p(V(t)x) \leq M_p \, e^{\sigma_p |t|} p(x)$, for all $p \in \Gamma$, $x \in \mathcal{X}$ and $t \in \mathbb{R}$.
\end{corollary}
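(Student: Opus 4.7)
The plan is to derive Corollary \ref{Corollary2} directly from Theorem \ref{Theorem1}, treating it as a mere reformulation under a stronger (and more familiar) hypothesis on the set of analytic vectors. The only comparison to make is between the notions of analytic vector and projective analytic vector for $T$: as already observed in the paragraph immediately preceding the corollary, a $\tau$-analytic vector is required to admit a single radius $r_x > 0$ that works \emph{uniformly} in $p \in \Gamma$ for the series $\sum_{n \geq 0} p(T^n x)|u|^n/n!$ to converge, while a $\tau$-projective analytic vector only needs a radius $r_{x,p} > 0$ that may depend on the seminorm $p$. Hence $C^\omega(T) \subseteq C^\omega_\leftarrow(T)$, and a dense subspace of analytic vectors is a fortiori a dense subspace of projective analytic vectors.

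With this inclusion in hand, every hypothesis of Theorem \ref{Theorem1} is fulfilled: the (KIP) with respect to $\Gamma$ is assumed in both statements, the resolvent-type estimate
\begin{equation*}
p((\lambda I - T)^n x) \geq M_p^{-1}(|\lambda| - \sigma_p)^n p(x), \qquad x \in \text{Dom }T^n,
\end{equation*}
for all $|\lambda| > \sigma_p$ and $n \in \mathbb{N}$, is identical in the two statements, and the density hypothesis of the corollary implies the density hypothesis of the theorem by the inclusion above. Therefore Theorem \ref{Theorem1} applies verbatim and produces all of the desired conclusions: closability of $T$, the fact that $\overline{T}$ generates a $\Gamma$-group $t \longmapsto V(t)$, and the estimate $p(V(t)x) \leq M_p\, e^{\sigma_p |t|} p(x)$ for all $p \in \Gamma$, $x \in \mathcal{X}$ and $t \in \mathbb{R}$.

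There is no genuine obstacle here; the corollary is a routine consequence of Theorem \ref{Theorem1}. Its purpose is to express the generation criterion in terms of the classical notion of analytic vector, which is often the one directly verifiable in applications (as will be seen in the Lie group and pseudodifferential sections later in the paper), while Theorem \ref{Theorem1} records the sharper statement in which the weaker, projective analyticity suffices.
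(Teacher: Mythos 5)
Your proposal is correct and coincides with the paper's own argument: the corollary is deduced from Theorem \ref{Theorem1} via the inclusion $C^\omega(T) \subseteq C^\omega_\leftarrow(T)$, which the paper had already recorded just before that theorem. Nothing further is needed.
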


\begin{proof}
Follows at once from Theorem \ref{Theorem1}, since $C^\omega(T) \subseteq C^\omega_\leftarrow(T)$.
\end{proof}

In view of Remark \ref{corollarysemi}, Corollary \ref{Corollary2} also has an analogous semigroup version. A converse statement of Theorem \ref{Theorem1} is also true:

\begin{theorem}\label{Theorem3}
Let $(\mathcal{X}, \Gamma)$ be a complete Hausdorff locally convex space and $T$ be a linear operator on $\mathcal{X}$ which is the generator of a $\Gamma$-group. Then, $T$ has a dense set of projective analytic vectors and, for each $p \in \Gamma$, there exist two real numbers $\sigma_p \geq 0$ and $M_p \geq 1$ such that \begin{equation*}p((\lambda I - T)^n x) \geq M_p^{-1} (|\lambda| - \sigma_p)^{n} p(x), \qquad x \in \text{Dom }T^n,\end{equation*} for all $|\lambda| > \sigma_p$ and $n \in \mathbb{N}$.
\end{theorem}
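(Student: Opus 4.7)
The plan is to reduce both assertions to classical Banach-space theory at the level of each quotient $\mathcal{X}_p$ and then lift back through the isomorphism $\mathcal{X} \cong \varprojlim \mathcal{X}_p$. Since $V$ is a $\Gamma$-group with $p(V(t)x) \leq M_p\, e^{\sigma_p|t|}\, p(x)$, it descends to a strongly continuous one-parameter group $\tilde{V}_p$ on $\mathcal{X}_p$ with $\|\tilde{V}_p(t)\|_p \leq M_p\, e^{\sigma_p|t|}$, whose generator is $\overline{T_p}$. Splitting $\tilde{V}_p$ into its two one-sided semigroups $s \longmapsto \tilde{V}_p(\pm s)$ and invoking the classical Hille--Yosida estimates yields $\|\mathrm{R}(\lambda, \overline{T_p})^n\|_p \leq M_p(|\lambda| - \sigma_p)^{-n}$ for every real $\lambda$ with $|\lambda| > \sigma_p$. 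Combined with the inclusion $T_p \subseteq \overline{T_p}$, the identities $\|[x]_p\|_p = p(x)$ and $[(\lambda I - T)^n x]_p = (\lambda I - T_p)^n [x]_p$ (valid on $\text{Dom }T^n$) convert this bound into the required lower estimate at once.

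For the dense set of projective analytic vectors I would use a Gaussian regularisation: for $x \in \mathcal{X}$ and $t > 0$, set
\[
G_t x := \sqrt{t/\pi}\int_{\mathbb{R}} e^{-ts^2}\, V(s)\, x\, ds,
\]
interpreted as a Banach-valued integral in each $\mathcal{X}_p$. The Gaussian decay together with $p(V(s)x) \leq M_p\, e^{\sigma_p|s|}\, p(x)$ makes the integrand absolutely integrable, and the images $(G_t x)_p$ form a compatible projective family, so completeness of $\mathcal{X}$ produces a well-defined $G_t x \in \mathcal{X}$. A standard change of variables shows $p(G_t x - x) \to 0$ as $t \to +\infty$ for every $p \in \Gamma$, so $\{G_t x : t > 0,\, x \in \mathcal{X}\}$ is dense.

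To see that each $G_t x$ lies in $C^\omega_\leftarrow(T)$, one uses the closedness of the quotient generators to justify moving $T$ inside the integral by repeated integration by parts — the Gaussian decay annihilates the boundary terms — obtaining
\[
T^n G_t x = (-1)^n \sqrt{t/\pi} \int_{\mathbb{R}} \frac{d^n}{ds^n}\bigl[e^{-ts^2}\bigr]\, V(s)\, x\, ds.
\]
Writing $\frac{d^n}{ds^n} e^{-ts^2} = (-1)^n (2t)^{n/2} H_n(\sqrt{t}\, s)\, e^{-ts^2}$ and inserting the standard Hermite bound $|H_n(y)| \leq c\sqrt{n!}\, 2^{n/2} e^{y^2/2}$ leads to $p(T^n G_t x) \leq C_{t,p}\, (4t)^{n/2} \sqrt{n!}\, p(x)$, whence
\[
\sum_{n \geq 0} \frac{p(T^n G_t x)}{n!}|u|^n \leq C_{t,p}\, p(x) \sum_{n \geq 0} \frac{(4t)^{n/2}}{\sqrt{n!}}|u|^n < +\infty
\]
for every $u \in \mathbb{C}$ and every $p \in \Gamma$, so $G_t x \in C^\omega_\leftarrow(T)$ (in fact $G_t x$ is entire). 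The main technical hurdle is the careful treatment of these integrals and their iterated $T$-images in the locally convex setting — in particular, the integration-by-parts step and the compatibility of all manipulations with the quotient maps $\pi_p$. Once the computations are carried out componentwise in each Banach space $\mathcal{X}_p$, where they are classical, the projective limit structure reassembles them into the desired statement about $T$ on $\mathcal{X}$.
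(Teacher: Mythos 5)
Your proposal is correct, and it reaches the conclusion by a more explicit and self-contained route than the paper. The paper proves density by citing Nelson's and Poulsen's heat-semigroup theorems on each quotient Banach space $\mathcal{X}_p$ (so that $\overline{T_p^2}$ generates a semigroup $S_p(t)$ mapping $\mathcal{X}_p$ into $C^\omega(\overline{T_p})$), then verifies via the exponential formula that $\left\{S_p(t)\right\}_{p}$ is a projective family and assembles the projective limit semigroup $S(t)$, whose range supplies the dense set of projective analytic vectors. Your Gaussian mollifier $G_t$ is, after the reparametrization $t \leftrightarrow 1/(4t)$, exactly this heat semigroup, but you compute everything by hand: change of variables to differentiate under the integral, Hermite-polynomial (Cram\'er) bounds to control $p(T^n G_t x)$, and the resulting series converges for \emph{all} $u$, so your vectors are in fact entire --- strictly stronger than projective analytic, and consistent with the paper's remark that Magyar's theorem already yields dense entire vectors. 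What the paper's route buys is the by-product that $\overline{T^2} = \varprojlim \overline{T_p^2}$ generates a $\Gamma$-semigroup, which is reused in Corollary \ref{Corollary4} and Theorem \ref{Theorem5}; your route does not produce that but is leaner for Theorem \ref{Theorem3} alone. Two small points to tidy up: the factor in $\frac{d^n}{ds^n}e^{-ts^2} = (-1)^n t^{n/2} H_n(\sqrt{t}\,s)e^{-ts^2}$ is $t^{n/2}$, not $(2t)^{n/2}$ (harmless, since only the $\sqrt{n!}$ growth matters for convergence), and the identity $[(\lambda I - T)^n x]_p = (\lambda I - T_p)^n [x]_p$ tacitly uses that $T$ has the (KIP) with respect to $\Gamma$ --- which, as the paper notes after Definition \ref{gamma}, is automatic for generators of $\Gamma$-groups, but deserves a word. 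The first half of your argument (the resolvent estimates via the two one-sided Feller--Miyadera--Phillips bounds on each $\mathcal{X}_p$) is exactly what the paper's citation of Babalola's Theorem 4.2 amounts to.
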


\begin{proof}
By \cite[Theorem 4.2]{babalola}, there exists a projective family $\left\{T_p\right\}_{p \in \Gamma}$ of closable linear operators such that $\overline{T_p}$ is the generator of a strongly continuous one-parameter group $\tilde{V}_p \colon t \longmapsto \tilde{V}_p(t)$ on $\mathcal{X}_p$, for each $p \in \Gamma$, with $T = \varprojlim \overline{T_p}$ being the generator of the $\Gamma$-group $V \colon t \longmapsto V(t)$ defined by $V(t) := \varprojlim \tilde{V}_p(t)$, $t \in \mathbb{R}$. Also, for each $p \in \Gamma$, there exist $\sigma_p \geq 0$ and $M_p \geq 1$ satisfying $p(V(t)x) \leq M_p \, e^{\sigma_p |t|} p(x)$, for all $x \in \mathcal{X}$ and $t \in \mathbb{R}$. Applying \cite[Theorem 4]{nelson} and \cite[Theorem 1.4]{poulsen} to the one-dimensional Lie group $\mathbb{R}$ (see also \cite{bratteliheat}), it follows that $\overline{T_p^2}$ is the generator of a strongly continuous one-parameter semigroup $t \longmapsto S_p(t)$ on $\mathcal{X}_p$ satisfying \begin{equation*}S_p(t)[\mathcal{X}_p] \subseteq C^\omega(\overline{T_p}), \qquad t > 0,\end{equation*} for each $p \in \Gamma$. Hence, \begin{equation} \label{S_p(t)} \bigcup_{t > 0} S_p(t)[\mathcal{X}_p] \subseteq C^\omega(\overline{T_p}), \qquad p \in \Gamma.\end{equation} Repeating the argument made in the proof of \cite[Theorem 4.2]{babalola} on formula \begin{equation*}S_p(t)x_p = \lim_{\lambda \rightarrow +\infty} e^{-\lambda t} \sum_{k = 0}^{+ \infty} \frac{(\lambda t)^k [\lambda \text{R}(\lambda, \overline{T_p^2})]^k}{k!} \, (x_p), \quad p \in \Gamma, \, t \geq 0, \, x_p \in \mathcal{X}_p,\end{equation*} (this formula may also be found in \cite[(11.7.2), page 352]{hille}) one sees that $\left\{S_p(t)\right\}_{p \in \Gamma}$ is a projective family of linear operators, for each fixed $t \geq 0$. Hence, the projective limit semigroup \begin{equation*}S \colon t \longmapsto S(t) := \varprojlim S_p(t), \qquad t \geq 0,\end{equation*} on $\varprojlim \mathcal{X}_p$ is well-defined.

Next, we show that $S(t)[\mathcal{X}] \subseteq C^\infty(T)$, for all $t > 0$. First, note that for all $x \in \mathcal{X}$, the function $t \longmapsto V(t)x$ is infinitely differentiable on $[0, +\infty)$ if, and only if, $t \longmapsto \tilde{V}_p(t)([x]_p)$ is infinitely differentiable on $[0, +\infty)$, for all $p \in \Gamma$. In other words, $x \in \mathcal{X}$ belongs to $C^\infty(T)$ if, and only if, $[x]_p$ belongs to $C^\infty(\overline{T_p})$, for every $p \in \Gamma$. Fix $t > 0$ and $x \in \mathcal{X}$. In view of $S_p(t)[\mathcal{X}_p] \subseteq C^\infty(\overline{T_p})$, for all $p \in \Gamma$, one concludes that $[S(t)x]_p = S_p(t)([x]_p)$ belongs to $C^\infty(\overline{T_p})$, for all $p \in \Gamma$. Therefore, $S(t)x$ must belong to $C^\infty(T)$, proving the inclusion $S(t)[\mathcal{X}] \subseteq C^\infty(T)$, for all $t > 0$. Combining this conclusion with (\ref{S_p(t)}) yields \begin{equation*}\bigcup_{t > 0} S(t)[\mathcal{X}] \subseteq C^\omega_\leftarrow(T).\end{equation*} Since $S \colon t \longmapsto S(t)$ is strongly continuous \cite[Lemma 7 b), page 26]{moore} the union above is dense in $\mathcal{X}$. This shows the density of $C^\omega_\leftarrow(T)$ in $\mathcal{X}$.

The claimed estimates follow at once from \cite[Theorem 4.2]{babalola}.
\end{proof}

Still under the hypotheses of Theorem \ref{Theorem3}, it is possible to show that $\overline{T^2}$ is the generator of a $\Gamma$-semigroup:

\begin{corollary}\label{Corollary4}
Let $(\mathcal{X}, \Gamma)$ be a complete Hausdorff locally convex space and $T$ be a linear operator on $\mathcal{X}$ which is the generator of a $\Gamma$-group. Then, $\overline{T^2}$ is the generator of a $\Gamma$-semigroup $S \colon t \longmapsto S(t)$ and $\mathcal{X}_0 := \left\{S(t)x: x \in \mathcal{X}, t > 0\right\}$ is a dense subspace of $\mathcal{X}$ which is a core for $\overline{T^2}$. Since $\mathcal{X}_0 \subseteq C^\omega_\leftarrow(T) \subseteq C^\infty(T) \subseteq C^\infty(\overline{T^2})$, the dense subspaces $C^\infty(T)$ and $C^\omega_\leftarrow(T)$ are also cores for $\overline{T^2}$. \end{corollary}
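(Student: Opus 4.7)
The plan is to build on the construction from the proof of Theorem \ref{Theorem3}, where $\overline{T_p^2}$ was shown to generate a strongly continuous semigroup $t \longmapsto S_p(t)$ on $\mathcal{X}_p$, the family $\{S_p(t)\}_{p \in \Gamma}$ was verified to be projective for each fixed $t \geq 0$, and the projective limit $S(t) := \varprojlim S_p(t)$ was defined on $\mathcal{X} = \varprojlim \mathcal{X}_p$. The first step is to verify that $S \colon t \longmapsto S(t)$ is a $\Gamma$-semigroup: strong continuity comes from the strong continuity of each $S_p$ together with the coarsest-topology characterization of the projective limit topology, while the bound $p(S(t)x) \leq M_p \, e^{\sigma_p t} p(x)$ is inherited from the standard exponential estimate for $\|S_p(t)\|_p$ on the Banach space $\mathcal{X}_p$, via $p(S(t)x) = \|S_p(t)[x]_p\|_p$. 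The semigroup version of \cite[Theorem 2.5]{babalola} then shows that the generator $A$ of $S$ coincides with $\varprojlim \overline{T_p^2}$.

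Next, I would identify $A$ with $\overline{T^2}$. One inclusion is direct from the projective-limit description: if $x \in \text{Dom }T^2$, then for every $p \in \Gamma$ one has $[x]_p \in \text{Dom }\overline{T_p}^2 \subseteq \text{Dom }\overline{T_p^2}$ and $\overline{T_p^2}[x]_p = [T^2 x]_p$, so $x \in \text{Dom }A$ with $Ax = T^2 x$; since $A$ is closed, this yields $\overline{T^2} \subseteq A$. The reverse inclusion will come from producing a core for $A$. The subspace $\mathcal{X}_0$ is $S$-invariant (since $S(s) S(t) = S(s + t)$), is contained in $C^\infty(S)$ (because $t \longmapsto S(t + t_0) y$ is smooth at $t = 0$ for any $t_0 > 0$ and $y \in \mathcal{X}$), and is dense in $\mathcal{X}$ (already established at the end of the proof of Theorem \ref{Theorem3}, via strong continuity of $S$ at $0^+$). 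Invoking the semigroup analogue of Lemma \ref{Lemma} (mentioned in the remark following it) gives that $\mathcal{X}_0$ is a core for $A$. Combined with $\mathcal{X}_0 \subseteq \text{Dom }T^2$ and $A|_{\mathcal{X}_0} = T^2|_{\mathcal{X}_0}$, this forces $A = \overline{A|_{\mathcal{X}_0}} = \overline{T^2|_{\mathcal{X}_0}} \subseteq \overline{T^2}$, completing the identification $A = \overline{T^2}$.

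For the remaining core statements, I would observe that any linear subspace $\mathcal{D}$ with $\mathcal{X}_0 \subseteq \mathcal{D} \subseteq \text{Dom }\overline{T^2}$ is automatically a core: the chain $\overline{T^2} = \overline{\overline{T^2}|_{\mathcal{X}_0}} \subseteq \overline{\overline{T^2}|_{\mathcal{D}}} \subseteq \overline{T^2}$ is forced. Since $\mathcal{X}_0 \subseteq C^\omega_\leftarrow(T) \subseteq C^\infty(T) \subseteq \text{Dom }\overline{T^2}$, both $C^\omega_\leftarrow(T)$ and $C^\infty(T)$ are thereby cores. The main obstacle I anticipate is the bookkeeping in the middle step: one must invoke the semigroup analogue of Lemma \ref{Lemma} (only the group statement is proved in the excerpt), verify the projective-limit inclusion $T^2 \subseteq A$ with the correct matching of domains, and check $\mathcal{X}_0 \subseteq C^\infty(S)$ through the semigroup law rather than through the analyticity of elements of $C^\omega_\leftarrow(T)$ for $T$ itself.
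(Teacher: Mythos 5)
Your overall route coincides with the paper's: both rely on the projective-limit semigroup $S(t) = \varprojlim S_p(t)$ built in the proof of Theorem \ref{Theorem3}, identify its generator with $\varprojlim \overline{T_p^2}$, obtain $\overline{T^2} \subseteq \varprojlim \overline{T_p^2}$ from the projective description, and close the gap by applying the semigroup version of Lemma \ref{Lemma} to a dense, $S$-invariant subspace of smooth vectors on which the generator restricts to $T^2$. The only real difference is cosmetic: the paper first applies the lemma with $\mathcal{D} = C^\infty(T)$ to get $\overline{T^2} = \varprojlim \overline{T_p^2}$ and then applies it a second time to $\mathcal{X}_0$, whereas you use $\mathcal{X}_0$ once and deduce the other cores by the sandwich argument $\mathcal{X}_0 \subseteq \mathcal{D} \subseteq \text{Dom }\overline{T^2}$; both are correct.

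There is, however, one step you justify incorrectly, and you single it out as the step you intend to handle this way. You claim $\mathcal{X}_0 \subseteq C^\infty(S)$ ``because $t \longmapsto S(t + t_0)y$ is smooth at $t = 0$ for any $t_0 > 0$,'' and later insist on checking this ``through the semigroup law rather than through the analyticity of elements of $C^\omega_\leftarrow(T)$.'' The semigroup law gives no regularization whatsoever: for a generic strongly continuous semigroup (e.g.\ translation on $C_0([0,+\infty))$), $S(t_0)y$ need not even lie in the domain of the generator, so $t \longmapsto S(t+t_0)y$ need not be differentiable at $0$. The inclusion $\mathcal{X}_0 \subseteq C^\infty(S)$ is true here only because $S$ is the ``heat semigroup'' of $T$ and has the smoothing property $S_p(t)[\mathcal{X}_p] \subseteq C^\omega(\overline{T_p})$ for $t > 0$ (Nelson/Poulsen), which is exactly what the proof of Theorem \ref{Theorem3} uses to get $S(t)[\mathcal{X}] \subseteq C^\infty(T)$, hence $\mathcal{X}_0 \subseteq C^\infty(T) \subseteq \bigcap_{n} \text{Dom}\,(T^2)^n \subseteq C^\infty(S)$. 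So you must route this inclusion through the smoothing property established in Theorem \ref{Theorem3} (or equivalently through $\mathcal{X}_0 \subseteq C^\omega_\leftarrow(T)$); with that substitution the rest of your argument goes through as written.
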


\begin{proof}
Let $S \colon t \longmapsto S(t)$ be the projective limit semigroup defined in Theorem \ref{Theorem3}. By the proof of Theorem \ref{Theorem3}, $C^\infty(T)$ is a dense subspace of $\mathcal{X}$ which is invariant by the operators $S(t)$, for all $t > 0$. Then, since $\overline{T^2} \subset \varprojlim \overline{T_p^2} =: \tilde{T}_2$, an application of Lemma \ref{Lemma} yields \begin{equation*} \tilde{T}_2 = \overline{\tilde{T}_2|_{C^\infty(T)}} = \overline{T^2|_{C^\infty(T)}} \subset \overline{T^2}, \end{equation*} so $\overline{T^2} = \varprojlim \overline{T_p^2}$. Also as a consequence of the proof of Theorem \ref{Theorem3}, the subspace \begin{equation*} \mathcal{X}_0 := \left\{S(t)x: x \in \mathcal{X}, t > 0\right\} \end{equation*} is dense in $\mathcal{X}$, so another application of Lemma \ref{Lemma} shows that $\mathcal{X}_0$ is a core for $\overline{T^2}$.
\end{proof}

Therefore, the following theorem holds:

\begin{theorem}\label{Theorem5}
Let $(\mathcal{X}, \Gamma)$ be a complete Hausdorff locally convex space, $T$ be a closed linear operator on $\mathcal{X}$ and, for each fixed $p \in \Gamma$, let $\sigma_p \geq 0$ and $M_p \geq 1$ be two real numbers. Then, $T$ is the generator of a $\Gamma$-group $t \longmapsto V(t)$ satisfying $p(V(t)x) \leq M_p \, e^{\sigma_p |t|} p(x)$, for all $p \in \Gamma$, $x \in \mathcal{X}$ and $t \in \mathbb{R}$ if, and only if, the following two conditions are satisfied:

\begin{enumerate}
\item $T$ has the (KIP) with respect to $\Gamma$ and, for each $p \in \Gamma$, \begin{equation*}p((\lambda I - T)^n x) \geq M_p^{-1} (|\lambda| - \sigma_p)^{n} p(x), \qquad x \in \text{Dom }T^n,\end{equation*} for all $|\lambda| > \sigma_p$, $n \in \mathbb{N}$;
\item $T$ has a dense set of projective analytic vectors.
\end{enumerate}
In this case, $C^\omega_\leftarrow(T)$ is a core for $T$ and $\overline{T^2}$ generates a $\Gamma$-semigroup $t \longmapsto S(t)$ such that the dense subspaces $\mathcal{X}_0 := \left\{S(t)x: x \in \mathcal{X}, t > 0\right\}$, $C^\omega_\leftarrow(T)$ and $C^\infty(T)$ are cores for $\overline{T^2}$.
\end{theorem}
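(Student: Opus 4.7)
The plan is to assemble this theorem as a direct combination of Theorem \ref{Theorem1} (sufficiency), Theorem \ref{Theorem3} (necessity), and Corollary \ref{Corollary4} (for the auxiliary $\overline{T^2}$-statement). The one new ingredient is the observation that since $T$ is already assumed closed, the closures appearing in the conclusions of Theorem \ref{Theorem1} collapse to $T$ itself; this is the only place the closedness hypothesis plays a role.

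For the ``only if'' direction, I will assume that $T$ generates a $\Gamma$-group $t \longmapsto V(t)$ with $p(V(t)x) \leq M_p \, e^{\sigma_p|t|} p(x)$. From the remarks following Definition \ref{gamma}, each $V(t)$ has the (KIP) with respect to $\Gamma$, and this property passes to the infinitesimal generator $T$ via the defining strong limit as $t \to 0$. Theorem \ref{Theorem3} then delivers simultaneously the density of $C^\omega_\leftarrow(T)$ in $\mathcal{X}$ and the lower bound on $p((\lambda I - T)^n x)$ stated in condition (1).

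For the ``if'' direction, conditions (1) and (2) are exactly the hypotheses of Theorem \ref{Theorem1}, which yields that $T$ is closable and $\overline{T}$ generates a $\Gamma$-group with the advertised bounds. Since $T$ is closed by assumption, $\overline{T} = T$, so $T$ itself is the desired generator. The bounds $p(V(t)x) \leq M_p \, e^{\sigma_p|t|} p(x)$ for the resulting group come directly from the conclusion of Theorem \ref{Theorem1}.

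Once the equivalence is in place, the final statements are bookkeeping. The Remark following Theorem \ref{Theorem1} asserts that $C^\omega_\leftarrow(T)$ is a core for the generator, so it is a core for $T$. Corollary \ref{Corollary4}, applied to the now-established $\Gamma$-group, produces the $\Gamma$-semigroup generated by $\overline{T^2}$ together with the three cores $\mathcal{X}_0 = \{S(t)x : x \in \mathcal{X},\, t > 0\}$, $C^\omega_\leftarrow(T)$, and $C^\infty(T)$. I do not anticipate any substantive obstacle: the real work was carried out in the preceding theorems, and the function of Theorem \ref{Theorem5} is essentially to package the biconditional and its structural consequences into a single clean statement.
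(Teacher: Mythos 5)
Your proposal is correct and matches the paper's treatment exactly: the paper states Theorem \ref{Theorem5} with no separate proof beyond the phrase ``Therefore, the following theorem holds,'' intending precisely the assembly you describe — Theorem \ref{Theorem3} (plus the (KIP) observation after Definition \ref{gamma}) for necessity, Theorem \ref{Theorem1} with $\overline{T} = T$ for sufficiency, and the Remark after Theorem \ref{Theorem1} together with Corollary \ref{Corollary4} for the core statements.
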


\begin{remark}
We note here that, by \cite[Theorem 3]{magyarpaper}, if $T$ is the generator of a $\Gamma$-group on a complete Hausdorff locally convex space, then it has a dense set of analytic vectors. This fact, when combined with Corollary \ref{Corollary2} and \cite[Theorem 4.2]{babalola}, gives an analogous characterization, if ones substitutes the words ``\textit{projective analytic vectors}'' by ``\textit{analytic vectors}'', in the statement of Theorem \ref{Theorem5}. Actually, \cite[Theorem 3]{magyarpaper} gives a much stronger result, regarding \textit{entire vectors} (for this definition, see page 95 of that paper): \textit{if $T$ is the generator of a $\Gamma$-group on a (sequentially) complete Hausdorff locally convex space, then it has a dense set of entire vectors}. Therefore, since every entire vector is trivially a projective analytic vector, Theorem \ref{Theorem5} remains valid if one substitutes the words ``\textit{projective analytic vectors}'' by ``\textit{entire vectors}''.
\end{remark}

In order to apply these results for equicontinuous groups, some concepts will be introduced:

\begin{definition}[Dissipative and Conservative Operators] \label{dissipative}
Following \cite[Definition 3.9]{albanese}, a linear operator $T \colon \text{Dom }T \subseteq \mathcal{X} \longrightarrow \mathcal{X}$ on $(\mathcal{X}, \Gamma)$ will be called \textit{$\Gamma$-dissipative} if, for every $p \in \Gamma$, $\lambda > 0$ and $x \in \text{Dom }T$, \begin{equation*}p((\lambda I - T)x) \geq \lambda \, p(x).\end{equation*} If $\mathcal{X}$ is Hausdorff, this implies $\lambda I - T$ is an injective linear operator, for every $\lambda > 0$. $T$ is called \textit{$\Gamma$-conservative} if both $T$ and $-T$ are $\Gamma$-dissipative or, equivalently, if the inequality \begin{equation*}p((\lambda I - T)x) \geq |\lambda| \, p(x)\end{equation*} holds for all $p \in \Gamma$, $\lambda \in \mathbb{R} \backslash \left\{0\right\}$ and $x \in \text{Dom }T$.\end{definition}

Whenever convenient, the prefix $\Gamma$ will be omitted from the terminology. Note that the definitions of dissipativity and conservativity both depend on the particular choice of the fundamental system of seminorms -- see Remark 3.10 of \cite{albanese}, for an illustration of this fact. For any given generator $T$ of an equicontinuous one-parameter semigroup $t \longmapsto V(t)$ (on a Hausdorff sequentially complete locally convex space), Remark 3.12 of \cite{albanese} shows with a simple calculation that there always exists a fundamental system of seminorms $\Gamma$ for $\mathcal{X}$ with respect to which $T$ is $\Gamma$-dissipative (see also \cite[Remark 2.2(i)]{albanesemontel}). With respect to this $\Gamma$, one also has that \begin{equation} \label{contractive} p(V(t)x) \leq p(x), \qquad p \in \Gamma, \, t \geq 0, \, x \in \mathcal{X}.\end{equation} A $\Gamma$-semigroup satisfying (\ref{contractive}) will be called a \textit{$\Gamma$-contractive semigroup}. A small adaptation of this remark yields an analogous result regarding $\Gamma$-conservativity for generators of equicontinuous one-parameter groups: more precisely, for any equicontinuous one-parameter group $t \longmapsto V(t)$ there always exists a fundamental system of seminorms $\Gamma$ for $\mathcal{X}$ with respect to which its generator, $T$, is $\Gamma$-conservative and \begin{equation} \label{isometric} p(V(t)x) = p(x), \qquad p \in \Gamma, \, t \in \mathbb{R}, \, x \in \mathcal{X}.\end{equation} A $\Gamma$-group satisfying (\ref{isometric}) will be called a \textit{$\Gamma$-isometric group}.

Hence, the theorems above apply directly to equicontinuous one-parameter groups, with the appropriate choice of $\Gamma$, making $M_p = 1$ and $\sigma_p = 0$, for every $p \in \Gamma$. They give the following useful corollary:

\begin{theorem}\label{Corollary6}
Let $(\mathcal{X}, \tau)$ be a complete Hausdorff locally convex space and $T$ be a linear operator on $\mathcal{X}$. Then,

\begin{enumerate}
\item If $T$ is closed, $T$ will be the generator of an equicontinuous group if, and only if, it has a dense set of projective analytic vectors and there is a fundamental system of seminorms $\Gamma$ with respect to which $T$ has the (KIP) and $T$ is $\Gamma$-conservative.
\item Let $\Gamma$ be a fundamental system of seminorms for $\mathcal{X}$. If $T$ is, simultaneously, a $\Gamma$-dissipative operator and the generator of a $\Gamma$-semigroup, then it actually generates a $\Gamma$-contractive semigroup. Consequently, if $T$ is a $\Gamma$-conservative operator which is the generator of a $\Gamma$-group, then it actually generates a $\Gamma$-isometric group; in this case, $\overline{T^2}$ is the generator of a $\Gamma$-contractive semigroup.
\end{enumerate}
\end{theorem}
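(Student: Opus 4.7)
The plan is to specialize Theorem \ref{Theorem5} to the parameters $M_p = 1$ and $\sigma_p = 0$, bridging between ``equicontinuous'' and ``$\Gamma$-isometric'' via the remarks immediately preceding this theorem. For the necessity direction of part (1), I would appeal to the cited remark that for any equicontinuous group one can select a fundamental system of seminorms $\Gamma$ with respect to which the group is $\Gamma$-isometric; then $V$ is a $\Gamma$-group with $M_p = 1$ and $\sigma_p = 0$, the generator $T$ is $\Gamma$-conservative, and (by the observation following Definition \ref{gamma}) $T$ has the (KIP) with respect to $\Gamma$, so Theorem \ref{Theorem5} supplies the dense set of projective analytic vectors. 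For sufficiency, I would upgrade $\Gamma$-conservativity by induction on $n$ to $p((\lambda I - T)^n x) \geq |\lambda|^n p(x)$ for $x \in \text{Dom } T^n$ and real $\lambda \neq 0$, and then invoke Theorem \ref{Theorem5} with $M_p = 1$ and $\sigma_p = 0$ to conclude that $T = \overline{T}$ generates a $\Gamma$-group $V$ with $p(V(t)x) \leq p(x)$, which is by definition equicontinuous.

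For part (2), the first assertion reduces through Babalola's projective decomposition $V = \varprojlim \tilde{V}_p$ (as used in Theorems \ref{Theorem1} and \ref{Theorem3}) to the classical Banach space fact that a dissipative generator of a strongly continuous semigroup generates a contraction semigroup. Concretely, I would transfer the $\Gamma$-dissipative inequality through the quotient map, using $\|[y]_p\|_p = p(y)$, to obtain $\|(\lambda I - T_p)[x]_p\|_p \geq \lambda \|[x]_p\|_p$ on the dense image of $\text{Dom } T$ in $\mathcal{X}_p$, extend to $\overline{T_p}$ by closedness, apply Lumer--Phillips to conclude that $\tilde{V}_p$ is a contraction semigroup, and translate back to $p(V(t)x) = \|\tilde{V}_p(t)[x]_p\|_p \leq \|[x]_p\|_p = p(x)$. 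The $\Gamma$-isometric conclusion for $\Gamma$-conservative group generators then follows by applying the contraction statement to both $T$ and $-T$ and combining the two inequalities via $V(t) V(-t) = I$.

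For the final assertion, Corollary \ref{Corollary4} already gives $\overline{T^2} = \varprojlim \overline{T_p^2}$ generating a $\Gamma$-semigroup $S = \varprojlim S_p$; since each $\tilde{V}_p$ is isometric on $\mathcal{X}_p$ (by the isometric conclusion transferred to the quotients), the heat-kernel representation $S_p(t) = (4\pi t)^{-1/2} \int_{\mathbb{R}} e^{-s^2/(4t)} \tilde{V}_p(s) \, ds$, cited already in the proof of Theorem \ref{Theorem3}, yields $\|S_p(t)\|_p \leq 1$, hence $p(S(t)x) \leq p(x)$ and $S$ is $\Gamma$-contractive.

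The hard part throughout is the transfer of dissipativity (or conservativity) from $T$ on $\mathcal{X}$ to classical Banach space dissipativity of $\overline{T_p}$ on each quotient $\mathcal{X}_p$: this requires carefully combining the (KIP) of $T$, the density of $\pi_p[\text{Dom } T]$ in $\mathcal{X}_p$, and closedness of $\overline{T_p}$. Once this reduction is carried out, the remaining steps are standard Banach space semigroup theory lifted through the projective limit, with the one ``non-trivial'' input being the Gaussian/heat-kernel formula used to show contractivity of $S_p$.
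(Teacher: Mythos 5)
Your proposal is correct and, for part (1) and the first two assertions of part (2), follows essentially the same path as the paper: necessity via the remark that an equicontinuous group admits a $\Gamma$ making it $\Gamma$-isometric (plus the automatic (KIP) of generators of $\Gamma$-groups) followed by Theorem \ref{Theorem5}; sufficiency by iterating conservativity to $p((\lambda I - T)^n x) \geq |\lambda|^n p(x)$ and applying Theorem \ref{Theorem5} with $M_p = 1$, $\sigma_p = 0$; and the dissipative-implies-contractive step by pushing dissipativity through $\pi_p$ to each $\overline{T_p}$ and invoking the Banach space Lumer--Phillips theorem (the paper supplies the needed range condition via Feller--Miyadera--Phillips, a detail you should make explicit, though it is routine since $\overline{T_p}$ is already known to be a generator). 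The one place you genuinely diverge is the final claim that $\overline{T^2}$ generates a $\Gamma$-contractive semigroup: the paper proves that each $T_p^2$ is dissipative by the purely algebraic factorization $\left\|\left(I - \lambda^2 T_p^2\right)x_p\right\|_p = \left\|\left(I - \lambda T_p\right)\left(I + \lambda T_p\right)x_p\right\|_p \geq \|x_p\|_p$, concludes that $\overline{T^2} = \varprojlim \overline{T_p^2}$ is $\Gamma$-dissipative, and then simply reapplies the first assertion of (2); you instead bound $\|S_p(t)\|_p \leq 1$ directly from the Gaussian integral representation of $S_p(t)$ in terms of the isometric group $\tilde{V}_p$. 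Your route is valid (the Gaussian formula is what underlies Nelson's Theorem 4, already used in Theorem \ref{Theorem3}), but note that the formula actually quoted in the proof of Theorem \ref{Theorem3} is the Yosida-type exponential formula from Hille--Phillips, not the heat-kernel integral, so you would need to cite or justify the latter separately; the paper's factorization argument is more elementary in that it needs no integral representation at all.
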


\begin{proof}
(1) ($\Rightarrow$) Suppose $T$ is the generator of an equicontinuous group. By the observations made right after Definition \ref{dissipative}, there exists a fundamental system of seminorms for $\mathcal{X}$ such that $T$ is $\Gamma$-conservative and $p(V(t)x) = p(x)$, for all $p \in \Gamma$, $t \in \mathbb{R}$ and $x \in \mathcal{X}$. Therefore, the result follows at once from Theorem \ref{Theorem5}.

($\Leftarrow$) Making $M_p = 1$ and $\sigma_p = 0$ in Theorem \ref{Theorem5} gives the conclusion that $T$ is the generator of a $\Gamma$-group $t \longmapsto V(t)$ on $\mathcal{X}$ satisfying $p(V(t)x) \leq p(x)$, for all $p \in \Gamma$, $t \in \mathbb{R}$ and $x \in \mathcal{X}$. Consequently, we also have that $p(x) = p(V(-t)V(t)x) \leq p(V(t)x)$, for all $p \in \Gamma$, $t \in \mathbb{R}$ and $x \in \mathcal{X}$, showing that $T$ is the generator of a $\Gamma$-isometric group. In particular, $T$ is the generator of an equicontinuous group.

(2) Suppose $T$ is a $\Gamma$-dissipative operator which is the generator of a $\Gamma$-semigroup. By \cite[Theorem 4.2]{babalola}, there exists a projective family $\left\{T_p\right\}_{p \in \Gamma}$ of closable linear operators such that $\overline{T_p}$ is the generator of a strongly continuous one-parameter semigroup $\tilde{V}_p \colon t \longmapsto \tilde{V}_p(t)$ on $\mathcal{X}_p$, for each $p \in \Gamma$, and $T = \varprojlim \overline{T_p}$ is the generator of the projective limit semigroup $t \longmapsto V(t)$ corresponding to $\left\{\tilde{V}_p\right\}_{p \in \Gamma}$. Since $T$ is a $\Gamma$-dissipative operator on $\mathcal{X}$ it follows that, for each $p \in \Gamma$, $\overline{T_p}$ is a dissipative operator on $\mathcal{X}_p$. Fix $p \in \Gamma$. Since $\overline{T_p}$ generates a strongly continuous semigroup it follows from the Feller-Miyadera-Phillips Theorem \cite[Theorem 3.8, page 77]{engel}, in particular, that there exists $\sigma_p \geq 0$ such that, for all $\lambda > \sigma_p$, one has $\text{Ran }(\lambda I - \overline{T_p}) = \mathcal{X}_p$. Therefore, by \cite[Proposition 3.14 (ii), (iv), page 82]{engel}, it follows that $\overline{\text{Ran }(\lambda I - T_p)} = \text{Ran }(\lambda I - \overline{T_p}) = \mathcal{X}_p$, for all $\lambda > 0$. Finally, with the aid of the Lumer-Phillips Theorem \cite[Theorem 3.15, page 83]{engel}, one concludes that $\overline{T_p}$ actually generates a contraction semigroup. Hence, \begin{equation*} p(V(t)x) = \|\tilde{V}_p(t)([x]_p)\|_p \leq \|[x]_p\|_p = p(x), \qquad p \in \Gamma, \, t \geq 0, \, x \in \mathcal{X},\end{equation*} and the conclusion that $T$ generates a $\Gamma$-contractive semigroup follows.

Now, suppose $T$ is a $\Gamma$-conservative operator which is the generator of a $\Gamma$-group. Then, repeating the above arguments for the $\Gamma$-dissipative generators $T$ and $-T$ yields $p(V(t)x) \leq p(x)$, for all $p \in \Gamma$, $t \in \mathbb{R}$ and $x \in \mathcal{X}$. Consequently, $p(V(t)x) = p(x)$, for all $p \in \Gamma$, $t \in \mathbb{R}$ and $x \in \mathcal{X}$, showing that $T$ is, indeed, the generator of a $\Gamma$-isometric group. Also, by Corollary \ref{Corollary4}, $\overline{T^2}$ is the generator of a $\Gamma$-semigroup so, as a result of what was just proved, in order to conclude that $\overline{T^2}$ is the generator of a $\Gamma$-contractive semigroup it is sufficient to show that it is $\Gamma$-dissipative. Fix $p \in \Gamma$ and $x_p \in \text{Dom }T_p^2$. Then, since $T = \varprojlim \overline{T_p}$ is $\Gamma$-conservative, \begin{equation*} \left\|\left(I - \lambda^2 T_p^2\right)x_p\right\|_p = \left\|\left(I - \lambda \, T_p\right)\left(I + \lambda \, T_p\right)x_p\right\|_p \geq \|x_p\|_p\end{equation*} for all $\lambda > 0$, showing that $T_p^2$ is a dissipative operator on $\mathcal{X}_p$, for all $p \in \Gamma$. Consequently, each $\overline{T_p^2}$ is also dissipative. But by the proof of Corollary \ref{Corollary4}, $\overline{T^2} = \varprojlim \overline{T_p^2}$ so $\overline{T^2}$ is, in fact, a $\Gamma$-dissipative operator, as claimed. Therefore, $\overline{T^2}$ is the generator of a $\Gamma$-contractive semigroup.
\end{proof}

\begin{remark}
If $T$ is a $\Gamma$-conservative operator for which $\text{Ran }(\lambda I - T)$ is dense in $\mathcal{X}$, for some nonzero $\lambda \in \mathbb{R}$, then the proof of \cite[Theorem 3.14]{albanese} adapted for groups shows that $T$ is closable and that $\overline{T}$ is the generator of a $\Gamma$-isometric group.
\end{remark}

\section{Examples and Applications}

Let $(\mathcal{X}, \tau)$ be a Banach space, $G$ be a real finite-dimensional Lie group of dimension $d$ with Lie algebra $\mathfrak{g}$ and $V \colon G \longrightarrow \mathcal{L}(\mathcal{X})$ be a strongly continuous representation of $G$ on $\mathcal{X}$. An adaptation of \cite[Proposition 10.1.6, page 263]{schmudgen} to the Banach space context shows that the map $\partial V \colon X \longmapsto \partial V(X) := dV(X)|_{C^\infty(V)}$ is a Lie algebra representation which extends to a representation of (the complexification of) the universal enveloping algebra of $\mathfrak{g}$. Fix a basis $\mathcal{B} := \left\{X_k\right\}_{1 \leq k \leq d}$ for $\mathfrak{g}$ and let $\|\, \cdot \,\|$ be a Banach space norm for $(\mathcal{X}, \tau)$. Since each $dV(X_k)$ is the infinitesimal generator of a strongly continuous one-parameter group on $\mathcal{X}$, it follows from the Feller-Miyadera-Phillips Theorem for groups \cite[page 79]{engel} that there exist $M_k \geq 1$ and $\beta_k \in \mathbb{R}$ such that \begin{equation*}\|[(\lambda - \beta_k)\text{R}(\lambda, \pm dV(X_k))]^n x\| \leq M_k \, \|x\|, \qquad \lambda > \beta_k, \, n \in \mathbb{N}, \, x \in \mathcal{X}.\end{equation*} Proceeding just as in the proof of implication (b) $\Rightarrow$ (a) of \cite[Theorem 3.8, page 77]{engel} gives the conclusion that the maps \begin{equation*} x \longmapsto \|x\|_{k, \pm} := \sup_{\mu > \beta_k} \sup_{n \in \mathbb{N}} \|[(\mu - \beta_k)\text{R}(\mu, \pm dV(X_k))]^n x\|, \qquad 1 \leq k \leq d\end{equation*} on $\mathcal{X}$ are well-defined norms which satisfy \begin{equation*}\|x\| \leq \|x\|_{k, \pm} \leq M_k \, \|x\| \quad \text{and} \quad \|(\lambda - \beta_k)\text{R}(\lambda, \pm dV(X_k))x\|_{k, \pm} \leq \|x\|_{k, \pm},\end{equation*} for all $1 \leq k \leq d$ and $\lambda > \beta_k$. Therefore, in particular, we have obtained $2d$ norms on $\mathcal{X}$, all equivalent to $\|\, \cdot \,\|$ such that, for every fixed $1 \leq k \leq d$, there exists $\beta_k \in \mathbb{R}$ with the property that the inequality \begin{equation*}\|(\lambda I - (\pm \partial V(X_k)))^m x\|_{k, \pm} \geq (|\text{Re }\lambda| - \beta_k)^m \, \|x\|_{k, \pm}\end{equation*} holds, whenever $|\text{Re }\lambda| > \beta_k$, $m \in \mathbb{N}$ and $x \in C^\infty(V)$.

Now, fix $1 \leq k_0 \leq d$. In what follows, the norm $\|\, \cdot \,\|_{k_0, +}$ constructed from the operator $dV(X_{k_0})$ will be denoted simply by $\|\, \cdot \,\|$, in order to simplify the notations (the calculations for the norm $\|\, \cdot \,\|_{k_0, -}$, constructed from $- dV(X_{k_0})$, are analogous).

Equip $C^\infty(V)$ with the topology $\tau_\infty$ defined by the family \begin{equation*}\Gamma_\infty^{k_0} := \left\{\|\, \cdot \,\|_n: n \in \mathbb{N}\right\}\end{equation*} of norms, where \begin{equation*}\|x\|_0 := \|x\|, \qquad dV(X_0) := I\end{equation*} and \begin{equation*}\|x\|_n := \max \left\{\|dV(X_{i_1}) \cdots dV(X_{i_n})x\|: 0 \leq i_j \leq d\right\}, \qquad n \geq 1\end{equation*} -- this is called the \textit{projective C$^\infty$-topology} on $C^\infty(V)$, and it does not depend upon the fixed basis $\mathcal{B}$. Also, if $\|\, \cdot \,\|'$ and $\|\, \cdot \,\|''$ are two equivalent norms on $\mathcal{X}$ generating the topology $\tau$ and satisfying $M_1 \, \|\, \cdot \,\|'' \leq \|\, \cdot \,\|' \leq M_2 \, \|\, \cdot \,\|''$, for certain $M_1, M_2 > 0$, then $M_1 \, \|\, \cdot \,\|_n'' \leq \|\, \cdot \,\|_n' \leq M_2 \, \|\, \cdot \,\|_n''$, for every $n \in \mathbb{N}$. Therefore, $\tau_\infty$ is independent of the choice of the Banach space norm, as long as an equivalent norm is chosen.

As a consequence of the Uniform Boundedness Principle, every strongly continuous one-parameter group on a Banach space is locally equicontinuous \cite[Proposition 5.5, page 39]{engel}, so each generator $dV(X)$, $X \in \mathfrak{g}$, is a closed densely defined linear operator on $\mathcal{X}$, as mentioned in the paragraph right after Definition \ref{smooth}. A straightforward adaptation of \cite[Theorem 1.1]{goodman} shows that \begin{equation*}C^\infty(V) = \bigcap_{k = 1}^d \bigcap_{n = 1}^{+ \infty} \text{Dom }dV(X_k)^n.\end{equation*} As a consequence of this description of $C^\infty(V)$, it is clear that $(C^\infty(V), \tau_\infty)$ is a Fr\'echet space -- to prove this, adapt the argument of \cite[Corollary 1.1]{goodman}, exploring the closedness of the operators $dV(X_k)$, $1 \leq k \leq d$. Moreover, $C^\infty(V)$ is a dense subspace of $\mathcal{X}$, since it contains the so called G\aa rding domain \begin{equation*} D_G(V) := \text{span}_{\mathbb{C}} \left\{\int_G \phi(g) \, V(g) x \, dg, \, \phi \in C_c^\infty(G), x \in \mathcal{X}\right\},\end{equation*} which is dense in $\mathcal{X}$ -- see \cite{garding}. Also, the operators \begin{equation*}V_\infty(g) := V(g)|_{C^\infty(V)} \colon C^\infty(V) \longrightarrow C^\infty(V), \qquad g \in G,\end{equation*} are continuous with respect to $\tau_\infty$ \cite[page 90]{poulsen}.

Define the family $\Gamma_\infty$ of norms on $C^\infty(V)$ just as above, but with $\|\, \cdot \,\|_0 = \|\, \cdot \,\|_{k_0, +}$ replaced by the original norm of $\mathcal{X}$. As an application of the theorems in Section 3, we are going to show that $V_\infty \colon g \longmapsto V_\infty(g)$ is a strongly continuous representation of $G$ on $(C^\infty(V), \tau_\infty)$, with each $\partial V(X)$ being the generator of the $\Gamma_\infty$-group $t \longmapsto V_\infty(\exp tX)$, and provide sufficient conditions for these generators to be of bounded type.

Theorem 2 of \cite{harish-chandra} shows that, given $X \in \mathfrak{g}$ and $x \in C^\omega(V)$, there exists $r_x > 0$ such that the series \begin{equation*}\sum_{m = 0}^{+ \infty} \frac{\|\partial V(X)^m x\|}{m!} \, |u|^m,\end{equation*} converges, if $|u| < r_x$. Since $C^\omega(V)$ is left invariant by the operators $\partial V(X)$, $X \in \mathfrak{g}$ (see \cite[page 209]{harish-chandra}), it is possible to iterate the calculations of Observations 1 and 2 inside the proof of \cite[Theorem 3.1]{jorgensengoodman} to obtain, respectively:

(a) for each fixed $X \in \mathfrak{g}$, $n \in \mathbb{N}$ and $x \in C^\omega(V)$, the series \begin{equation*}\sum_{m = 0}^{+ \infty} \frac{\|\partial V(X)^m x\|_n}{m!} \, |u|^m,\end{equation*} converges for sufficiently small $|u|$ (with a radius of convergence depending on $x$ and $n$);

(b) for every $n \in \mathbb{N}$, $x \in C^\infty(V)$ and $\lambda \in \mathbb{C}$ satisfying $|\text{Re }\lambda| > l_n(k_0) := \beta_{k_0} + n \, \mu_{k_0}$, one has \begin{equation} \label{conservativetype} \|(\lambda I - \partial V(X_{k_0}))^m x\|_n \geq (|\text{Re }\lambda| - l_n(k_0))^m \, \|x\|_n, \qquad m \in \mathbb{N},\end{equation} where $\mu_{k_0} \equiv \mu(X_{k_0})$ is the operator norm of $\text{ad }\partial V(X_{k_0}) := [\partial V(X_{k_0}), \, \cdot \,\,]$, when seen as a linear operator on $(\partial V[\mathfrak{g}], \|\, \cdot \,\|_1)$, with $\left\|\sum_{j = 1}^d c_j \, \partial V(X_j)\right\|_1 := \sum_{j = 1}^d |c_j|$ -- if $\left\{\partial V(X_j)\right\}_{1 \leq j \leq d}$ is not a linearly independent set, then one may extract from it a basis for $\partial V[\mathfrak{g}]$; here, we have assumed, without loss of generality, that $\left\{\partial V(X_j)\right\}_{1 \leq j \leq d}$ is already a basis, so that $\|\, \cdot \,\|_1$ is a well-defined norm on $\partial V[\mathfrak{g}]$.\footnote{Note that the definition of $\|\, \cdot \,\|_1$ employed in \cite{jorgensengoodman} is slightly different from ours, so the constants $l_n(k_0)$ must be adjusted accordingly.}

In particular, (a) shows that \begin{equation*}C^\omega(V) \subseteq \cap_{k = 1}^d C^\omega_\leftarrow(\partial V(X_k)).\end{equation*} Also, note that $C^\omega_\leftarrow(\partial V(X_k)) \subseteq C^\infty(V)$, $1 \leq k \leq d$. By \cite[Theorem 1.3]{poulsen}, $C^\omega(V)$ is $\tau_\infty$-dense in $C^\infty(V)$, since it is a $\tau$-dense subspace of $\mathcal{X}$ \cite[Theorem 4]{nelson} which satisfies $V(g)[C^\omega(V)] \subseteq C^\omega(V)$, for all $g \in G$. Hence, it follows that $C^\omega_\leftarrow(\partial V(X_{k_0}))$ is $\tau_\infty$-dense in $C^\infty(V) = \text{Dom }\partial V(X_{k_0})$. The operator $\partial V(X_{k_0})$ also has the (KIP) with respect to $\Gamma_\infty^{k_0}$ (since $\Gamma_\infty^{k_0}$ consists of norms), a fact which, combined with (b), shows that its closure $\overline{\partial V(X_{k_0})}^\infty$ with respect to $\tau_\infty$ is the generator of a $\Gamma_\infty^{k_0}$-group $V_{k_0} \colon t \longmapsto V_{k_0}(t)$ on $C^\infty(V)$, by Theorem \ref{Theorem1}. Since $\partial V(X)$ is a continuous operator on $(C^\infty(V), \tau_\infty)$, for all $X \in \mathfrak{g}$, the equality $\partial V(X_{k_0}) = \overline{\partial V(X_{k_0})}^\infty$ follows. But there is a subtle point which must be mentioned: the fundamental system of norms $\Gamma_\infty^{k_0}$ was constructed with respect to the norm $\|\, \cdot \,\|_0 = \|\, \cdot \,\|_{k_0, +}$, so while $\|V_{k_0}(t)x\|_n \leq e^{l_n(k_0) |t|} \|x\|_n$, for all $n \in \mathbb{N}$, $t \in \mathbb{R}$ and $x \in C^\infty(V)$, for this choice of the Banach space norm, one has for the \textit{original} Banach space norm $\|\, \cdot \,\|_0 = \|\, \cdot \,\|$ the estimates $\|V_{k_0}(t)x\|_n \leq M_{k_0} \, e^{l_n(k_0) |t|} \|x\|_n$, where $M_{k_0} > 0$, $n \in \mathbb{N}$, $t \in \mathbb{R}$ and $x \in C^\infty(V)$. This is a consequence of the already mentioned fact that two equivalent norms $\|\, \cdot \,\|'$ and $\|\, \cdot \,\|''$ on $\mathcal{X}$ satisfying $M_1 \, \|\, \cdot \,\|'' \leq \|\, \cdot \,\|' \leq M_2 \, \|\, \cdot \,\|''$, for certain $M_1, M_2 > 0$, also satisfy $M_1 \, \|\, \cdot \,\|_n'' \leq \|\, \cdot \,\|_n' \leq M_2 \, \|\, \cdot \,\|_n''$, for every $n \in \mathbb{N}$; the constant $M_{k_0}$, above, appears as a consequence of this reasoning -- use the estimates (\ref{conservativetype}) combined with Theorem \ref{Theorem1}. Therefore, since $1 \leq k_0 \leq d$ is arbitrary, invoking Theorem \ref{Theorem1} again we conclude that $\partial V(X_k)$ is the generator of a $\Gamma_\infty$-group $V_k \colon t \longmapsto V_k(t)$ on $C^\infty(V)$, for all $1 \leq k \leq d$.

Next, we prove that $V_k(t) = V_\infty(\exp t X_k)$, for every $t \in \mathbb{R}$ and $1 \leq k \leq d$. First, note that a $\Gamma_\infty$-group $\Psi \colon t \longmapsto \Psi(t)$ on $(C^\infty(V), \tau_\infty)$ can always be extended to a strongly continuous one-parameter group $\tilde{\Psi} \colon t \longmapsto \tilde{\Psi}(t)$ on $(\mathcal{X}, \tau)$. In fact, for each fixed $t \in \mathbb{R}$, define $\tilde{\Psi}(t)$ as the unique continuous linear operator extending $\Psi(t)$ defined on all of $\mathcal{X}$. Let $\epsilon > 0$ and fix $x_0 \in \mathcal{X}$. From the definition of $\tilde{\Psi}(t)$ and the fact that $\Psi \colon t \longmapsto \Psi(t)$ is a $\Gamma_\infty$-group on $(C^\infty(V), \tau_\infty)$, it follows that there exist $M_0, \sigma_0 \in \mathbb{R}$ such that \begin{equation*} \|\tilde{\Psi}(t)x\| \leq M_0 \, e^{\sigma_0 |t|} \|x\|, \qquad t \in \mathbb{R}, \, x \in \mathcal{X}.\end{equation*} Define $\tilde{M}_0 := \max \left\{M_0 \, e^{\sigma_0}, 1\right\}$. Since $C^\infty(V)$ is dense in $\mathcal{X}$, there exists $y \in C^\infty(V)$ such that $\|x_0 - y\| < \epsilon / (3 \, \tilde{M}_0)$. Also, because of the strong continuity of $\Psi$ on $(C^\infty(V), \tau_\infty)$ one guarantees, in particular, the existence of $0 < r_0 \leq 1$ with the property that $\|\tilde{\Psi}(t)y - y\| < \epsilon / 3$, for all $|t| < r_0$. Hence, if $|t| < r_0$, \begin{equation*} \|\tilde{\Psi}(t)x_0 - x_0\| \leq \sup_{t \in [-1, 1]} \|\tilde{\Psi}(t)(x_0 - y)\| + \|\tilde{\Psi}(t)y - y\| + \|y - x_0\|\end{equation*} \begin{equation*} < M_0 \, e^{\sigma_0} \|x_0 - y\| + \epsilon / 3 + \epsilon / (3 \, \tilde{M}_0) \leq \epsilon.\end{equation*} This shows $\tilde{\Psi} \colon t \longmapsto \tilde{\Psi}(t)$ is a strongly continuous one-parameter group on $(\mathcal{X}, \tau)$, as claimed.

Therefore, for all $1 \leq k \leq d$, the one-parameter group $V_k$ on $C^\infty(V)$ can be extended to a strongly continuous one-parameter group $\tilde{V}_k \colon t \longmapsto \tilde{V}_k(t)$ on $(\mathcal{X}, \tau)$, which is generated by an operator $T_k$. Fix $1 \leq k \leq d$. Since $\tau_\infty$ is finer than $\tau$, $T_k|_{C^\infty(V)} = \partial V(X_k)$. By \cite[Corollary 1.2]{poulsen} (or even Lemma \ref{Lemma}), it follows that $C^\infty(V) \subseteq C^\infty(\tilde{V}_k)$ is a core for $T_k$. Hence, \begin{equation*} T_k = \overline{T_k|_{C^\infty(V)}} = \overline{\partial V(X_k)} = dV(X_k).\end{equation*} Now, because two strongly continuous one-parameter groups on a Banach space having the same infinitesimal generator must be equal \cite[Theorem 1.4, page 51]{engel} one has, in particular, that $V_k(t) = V_\infty(\exp t X_k)$, for every $t \in \mathbb{R}$.

There exist $d$ real-valued analytic functions $\left\{t_k\right\}_{1 \leq k \leq d}$ defined on a relatively compact open neighborhood $\Omega$ of the identity of $G$ such that $g \longmapsto (t_k(g))_{1 \leq k \leq d}$ maps $\Omega$ diffeomorphically onto a neighborhood of the origin of $\mathbb{R}^d$, with \begin{equation*}g = \exp (t_1(g) \, X_1) \ldots \exp (t_k(g) \, X_k) \ldots \exp (t_d(g) \, X_d), \qquad g \in \Omega,\end{equation*} so \begin{equation*}V_\infty(g) = V_\infty(\exp (t_1(g) \, X_1)) \ldots V_\infty(\exp (t_k(g) \, X_k)) \ldots V_\infty(\exp (t_d(g) \, X_d))\end{equation*} on this neighborhood. This decomposition, combined with the local equicontinuity of each $\Gamma_\infty$-group $t \longmapsto V_\infty(\exp t X_k)$, establishes the strong continuity of $V_\infty$ with respect to $\tau_\infty$.

Now, choosing an adequate norm $\|\, \cdot \,\|_X$ for each fixed $X \in \mathfrak{g}$, just like it was done with the basis elements, and repeating the above reasoning, one sees that \begin{equation*}C^\omega(V) \subseteq  C^\omega_\leftarrow(\partial V(X)) \subseteq C^\infty(V),\end{equation*} with each $\partial V(X)$ being the generator of the $\Gamma_\infty$-group $t \longmapsto V_\infty(\exp tX)$ -- if $\partial V(X)$ already has the property that, for some $\beta(X) \in \mathbb{R}$ and every $\lambda \in \mathbb{C}$ satisfying $|\text{Re }\lambda| > \beta(X)$, \begin{equation*}\|(\lambda I - \partial V(X))^m x\| \geq (|\text{Re }\lambda| - \beta(X))^m \, \|x\|, \qquad x \in C^\infty(V), \, m \in \mathbb{N},\end{equation*} then $\|\, \cdot \,\|$ is already an ``adequate'' norm, so one may take $\|\, \cdot \,\|_X := \|\, \cdot \,\|$. Consequently, for every $n \in \mathbb{N}$, $x \in C^\infty(V)$ and $\lambda \in \mathbb{C}$ satisfying $|\text{Re }\lambda| > l_n(X) := \beta(X) + n \, \mu(X)$, one has \begin{equation*}\|(\lambda I - \partial V(X))^m x\|_n \geq \frac{(|\text{Re }\lambda| - l_n(X))^m}{M_X} \, \|x\|_n, \qquad m \in \mathbb{N},\end{equation*} for certain numbers $\beta(X) \in \mathbb{R}$ and $M_X > 0$, where $\mu(X)$ is the operator norm of $\text{ad }\partial V(X) := [\partial V(X), \, \cdot \,\,]$ when seen as a linear operator on $(\partial V[\mathfrak{g}], \|\, \cdot \,\|_1)$. Note also that, by Corollary \ref{Corollary4}, $[\partial V(X)]^2 = \overline{[\partial V(X)]^2}^\infty$ is the generator of a $\Gamma_\infty$-semigroup.

Since \begin{equation*}\sup_{n \in \mathbb{N}} l_n(X) < + \infty \quad \text{ if, and only if, } \quad \mu(X) = 0, \end{equation*} it follows that if the type of $t \longmapsto V(\exp tX)$ is not $- \infty$ (so that it is a real number) and $\mu(X) = 0$, then $t \longmapsto V_\infty(\exp tX)$ is of bounded type. If $\mu(X) = 0$ and $t \longmapsto V(\exp tX)$ is an isometric representation on $(\mathcal{X}, \|\, \cdot \,\|)$, meaning $\|V(\exp tX)x\| = \|x\|$, for all $t \in \mathbb{R}$ and $x \in \mathcal{X}$, then as a consequence of the above calculations combined with Theorem \ref{Corollary6}(2), $t \longmapsto V_\infty(\exp tX)$ is a $\Gamma_\infty$-isometric group on $C^\infty(V)$, since $\partial V(X)$ is $\Gamma_\infty$-conservative. In this case, $[\partial V(X)]^2$ is the generator of a $\Gamma_\infty$-contractive semigroup, also by Theorem \ref{Corollary6}(2). Summarizing:

\begin{theorem} \label{examples}
If $(\mathcal{X}, \|\, \cdot \,\|)$ is a Banach space and $V \colon G \longrightarrow \mathcal{L}(\mathcal{X})$ is a strongly continuous representation on $\mathcal{X}$, then it restricts to a strongly continuous representation $V_\infty \colon G \longrightarrow \mathcal{L}(C^\infty(V))$ on $(C^\infty(V), \tau_\infty)$ which is implemented by (one-parameter) $\Gamma_\infty$-groups, where $\tau_\infty$ is the topology generated by the family $\Gamma_\infty := \left\{\|\, \cdot \,\|_n: n \in \mathbb{N}\right\}$ of norms on $C^\infty(V)$ defined by \begin{equation*}\|x\|_0 := \|x\|, \qquad dV(X_0) := I\end{equation*} and \begin{equation*}\|x\|_n := \max \left\{\|dV(X_{i_1}) \cdots dV(X_{i_n})x\|: 0 \leq i_j \leq d\right\}, \qquad n \geq 1.\end{equation*} The space $C^\omega(V)$ of analytic vectors for $V$ is a $\tau_\infty$-dense subspace of $C^\infty(V)$ consisting of $\tau_\infty$-projective analytic vectors for the generator $\partial V(X)$ of the $\Gamma_\infty$-group $t \longmapsto V_\infty(\exp tX)$, for all $X \in \mathfrak{g}$. Also, $[\partial V(X)]^2$ is the generator of a $\Gamma_\infty$-semigroup, for all $X \in \mathfrak{g}$.

Moreover, if $X \in \mathfrak{g}$ satisfies $\mu(X) = 0$ and the type of $t \longmapsto V(\exp tX)$ is not $- \infty$, then the $\Gamma_\infty$-group $t \longmapsto V_\infty(\exp tX)$ is of bounded type. In other words, if the type of $t \longmapsto V(\exp tX)$ is not $- \infty$ and $\partial V(X)$ belongs to the center of the Lie algebra $\partial V[\mathfrak{g}]$, then the $\Gamma_\infty$-group $t \longmapsto V_\infty(\exp tX)$ is of bounded type.

If $X \in \mathfrak{g}$ satisfies $\mu(X) = 0$ and $t \longmapsto V(\exp tX)$ is an isometric representation on $(\mathcal{X}, \|\, \cdot \,\|)$, then $t \longmapsto V_\infty(\exp tX)$ is a $\Gamma_\infty$-isometric group on $C^\infty(V)$ and $[\partial V(X)]^2$ is the generator of a $\Gamma_\infty$-contractive semigroup on $C^\infty(V)$.
\end{theorem}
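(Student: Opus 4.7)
The plan is to read off the theorem from the three-step construction carried out in the paragraphs immediately preceding it: strong continuity of $V_\infty$ on $(C^\infty(V),\tau_\infty)$, the identification of $\partial V(X)$ as the generator of the $\Gamma_\infty$-group $t \longmapsto V_\infty(\exp tX)$, and the claim about $[\partial V(X)]^2$ all follow from Theorem \ref{Theorem1}, Corollary \ref{Corollary4} and Theorem \ref{Corollary6}(2) once the required iterated resolvent estimates and the density of projective analytic vectors have been established for every fixed $X \in \mathfrak{g}$.

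First, for each $X \in \mathfrak{g}$, I would build an equivalent Banach-space norm $\|\cdot\|_X$ on $\mathcal{X}$ by the same Feller--Miyadera--Phillips functional that produced the norms $\|\cdot\|_{k,\pm}$, so that $\partial V(X)$ acquires a conservative-type estimate at the zeroth level. The Jorgensen--Goodman iteration along the basis $\mathcal{B}$, already carried out to reach (\ref{conservativetype}), then propagates this estimate to every $\|\cdot\|_n$, giving
\[\|(\lambda I - \partial V(X))^m x\|_n \geq M_X^{-1}(|\text{Re }\lambda| - l_n(X))^m \, \|x\|_n\]
for every $m \in \mathbb{N}$, $x \in C^\infty(V)$ and $|\text{Re }\lambda| > l_n(X) := \beta(X) + n\,\mu(X)$.

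Next, iterating Harish-Chandra's analytic-vector majorants \cite{harish-chandra} under each norm $\|\cdot\|_n$ gives the containment $C^\omega(V) \subseteq C^\omega_\leftarrow(\partial V(X))$; since $C^\omega(V)$ is $\tau$-dense in $\mathcal{X}$ by \cite[Theorem 4]{nelson} and $V(g)$-invariant, Poulsen's \cite[Theorem 1.3]{poulsen} upgrades that density to the $\tau_\infty$-topology. The (KIP) with respect to $\Gamma_\infty$ is automatic because every element of $\Gamma_\infty$ is a norm, so Theorem \ref{Theorem1} produces a $\Gamma_\infty$-group on $C^\infty(V)$ whose generator is the $\tau_\infty$-closure of $\partial V(X)$; the $\tau_\infty$-continuity of $\partial V(X)$ makes that closure equal to $\partial V(X)$. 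I would then extend this $\Gamma_\infty$-group to a strongly continuous group on $(\mathcal{X},\tau)$ by an equicontinuity-plus-$\tau$-density approximation, identify its generator with $dV(X)$ via Poulsen's core theorem, and invoke uniqueness of strongly continuous groups on Banach spaces \cite[Theorem 1.4, page 51]{engel} to conclude the $\Gamma_\infty$-group coincides with $t \longmapsto V_\infty(\exp tX)$. Local exponential coordinates combined with local equicontinuity of each one-parameter $\Gamma_\infty$-group then give strong continuity of $V_\infty$ on $G$, and Corollary \ref{Corollary4} supplies the $\Gamma_\infty$-semigroup generated by $[\partial V(X)]^2$.

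For the last two assertions, note that $\mu(X) = 0$ collapses $l_n(X)$ to the constant $\beta(X)$, so the $\Gamma_\infty$-group has exponential rate uniform in $n$ and is therefore of bounded type whenever $\beta(X)$ can be chosen finite, which is exactly the condition that the type of $t \longmapsto V(\exp tX)$ on $\mathcal{X}$ is not $-\infty$. If in addition $V(\exp tX)$ is isometric on $\mathcal{X}$, one may take $\beta(X) = 0$ and hence $l_n(X) = 0$ for all $n$, which makes $\partial V(X)$ $\Gamma_\infty$-conservative; Theorem \ref{Corollary6}(2) then promotes the $\Gamma_\infty$-group to a $\Gamma_\infty$-isometric one and makes $[\partial V(X)]^2$ the generator of a $\Gamma_\infty$-contractive semigroup. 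The main obstacle I anticipate is not any single step but the disciplined bookkeeping of the multiplicative constant $M_X$ arising in the transition between $\|\cdot\|_X$ and the original $\|\cdot\|$: that prefactor is precisely what must be controlled so that the hypotheses of Theorem \ref{Theorem1} yield a $\Gamma_\infty$-group on the \emph{original} family $\Gamma_\infty$ rather than on the auxiliary family built from $\|\cdot\|_X$.
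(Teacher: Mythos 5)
Your proposal is correct and follows essentially the same route as the paper's own proof (which is precisely the long construction preceding the theorem statement): the Feller--Miyadera--Phillips renorming $\|\cdot\|_X$, the Goodman--Jorgensen iteration propagating the conservative-type estimates to every $\|\cdot\|_n$, Harish-Chandra plus Poulsen for the $\tau_\infty$-density of $C^\omega(V)$ inside $C^\omega_\leftarrow(\partial V(X))$, Theorem \ref{Theorem1} together with the extension-to-$\mathcal{X}$ and uniqueness argument identifying the resulting group with $t \mapsto V_\infty(\exp tX)$, local coordinates of the second kind for strong continuity of $V_\infty$, and Corollary \ref{Corollary4} and Theorem \ref{Corollary6}(2) for the statements about $[\partial V(X)]^2$ and the isometric case. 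The only slight imprecision is in the bounded-type step: $\beta(X)$ is always finite by Feller--Miyadera--Phillips, and the hypothesis that the type of $t \mapsto V(\exp tX)$ is not $-\infty$ is what guarantees that $\sup_n w_n$ is bounded below and hence a real number, rather than being the condition under which $\beta(X)$ can be chosen finite.
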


For an example of this situation, consider the Schwartz function space $\mathcal{S}(\mathbb{R}^n)$ as a subspace of $L^2(\mathbb{R}^n)$ and equip it with the family $\Gamma_\infty$ of norms given by $\|\, \cdot \,\|_m \colon f \longmapsto \max_{|\alpha| \leq m, |\beta| \leq m} \|x^\alpha \partial^\beta f\|_2$, where $\|\, \cdot \,\|_2$ denotes the L$^2$-norm, $m \in \mathbb{N}$ and $\alpha, \beta \in \mathbb{N}^n$ are multiindices. Also, define a matrix group by \begin{equation*}H_{2n + 1}(\mathbb{R}) = \left\{\begin{bmatrix}
1 & \textbf{a}^T & c\\
0 & I_n & \textbf{b}\\
0 & 0 & 1
\end{bmatrix}: \textbf{a}, \textbf{b} \in \mathbb{R}^n, c \in \mathbb{R}\right\}\end{equation*} with the usual matrix multiplication, where $I_n$ is the identity matrix of $M_n(\mathbb{R})$. This is known as the Heisenberg group of dimension $2n + 1$. Define a strongly continuous unitary representation $U$ of $H_{2n + 1}(\mathbb{R})$ on $L^2(\mathbb{R}^n)$ by \begin{equation*}U_{\textbf{a}, \textbf{b}, c} := U\begin{bmatrix}
1 & \textbf{a}^T & c\\
0 & I_n & \textbf{b}\\
0 & 0 & 1
\end{bmatrix} \colon f \longmapsto e^{ic} e^{i \langle \textbf{b}, \cdot \, \rangle} f(\, \cdot \, - \textbf{a}).\end{equation*} Then, its Lie algebra is sent onto the ($2n + 1$)-dimensional real Lie algebra \begin{equation*}\mathcal{L} := \text{span}_\mathbb{R} \left\{\partial_k \Big|_{\mathcal{S}(\mathbb{R}^n)}, i x_k|_{\mathcal{S}(\mathbb{R}^n)}, i|_{\mathcal{S}(\mathbb{R}^n)}: 1 \leq k \leq n\right\}\end{equation*} via the Lie algebra representation $\partial U$, since $C^\infty(U) = \mathcal{S}(\mathbb{R}^n)$, and it is a realization of the Canonical Commutation Relations (CCR) by unbounded operators on $L^2(\mathbb{R}^n)$. An easy adaptation of the calculations above shows, in particular, that the linear operators $c_0 \, i + \sum_{k = 1}^n c_k \, \partial_k + \sum_{k = 1}^n d_k \, i x_k$, $c_k, d_k \in \mathbb{R}$, all generate $\Gamma_\infty$-groups on $\mathcal{S}(\mathbb{R}^n)$, thus complementing some of the examples given in \cite{babalola}. Moreover, their squares generate $\Gamma_\infty$-semigroups on $\mathcal{S}(\mathbb{R}^n)$.

For another example, consider the torus $\mathbb{T}^n := \mathbb{R}^n/(2\pi \mathbb{Z})^n$ and, for each $y \in \mathbb{T}^n$, let $T_y$ denote the unitary operator on $L^2(\mathbb{T}^n)$ defined by $(T_y \, u)(x) = u(x - y)$. For each $j \in \mathbb{Z}^n$, let $e_j \in C^\infty(\mathbb{T}^n)$ be defined by $e_j(x) = e^{i \langle j, x \rangle}$, where we denote by the same letter $x$ both an element $x$ of $\mathbb{R}^n$ and its class $[x] \in \mathbb{T}^n$. Then, by \cite[Theorem 2]{cabralmelo}, a bounded operator $A \in \mathcal{L}(L^2(\mathbb{T}^n))$ is such that the map $\mathbb{T}^n \ni y \longmapsto T_y A T_{-y}$ is smooth with respect to the norm topology of $\mathcal{L}(L^2(\mathbb{T}^n))$ if, and only if, $A = \text{Op}(a_j)$ for some symbol $(a_j)_{j \in \mathbb{Z}^n}$ of order zero: in other words if, and only if, $A$ is a bounded operator on $L^2(\mathbb{T}^n)$ defined by \begin{equation*}Au(x) = \frac{1}{(2 \pi)^n} \sum_{j \in \mathbb{Z}^n} a_j(x) e_j(x) \widehat{u}_j, \qquad \widehat{u}_j := \int_{\mathbb{T}^n} e_{-j} u,\end{equation*} for all $u \in C^\infty(\mathbb{T}^n)$ and $x \in \mathbb{T}^n$, with $(a_j)_{j \in \mathbb{Z}^n}$ satisfying $a_j \in C^\infty(\mathbb{T}^n)$ and \begin{equation*}\sup \left\{|\partial^\alpha a_j(x)|; j \in \mathbb{Z}^n, x \in \mathbb{T}^n\right\}<\infty,\end{equation*} for every multiindex $\alpha \in \mathbb{N}^n$. For such $A = \text{Op}(a_j)$, one has \begin{equation*}T_y A T_{-y} = \text{Op}((T_y \, a_j)_{j \in \mathbb{Z}^n}),\end{equation*} so $\partial_y^\alpha (T_y A T_{-y}) = T_y [\text{Op}((\partial^\alpha a_j)_{j \in \mathbb{Z}^n})] T_{-y}$, for every $\alpha \in \mathbb{N}^n$ and $y \in \mathbb{T}^n$. This last equality can be proved by the repeated use of the equality \begin{equation*} a_j(x + h f_k) - a_j(x) - h \, \partial_k a_j(x) = h^2 \int_0^1 t \int_0^1 \partial_k^2 a_j(x + t s h f_k) ds \, dt,\end{equation*} where $j \in \mathbb{Z}^n$, $h \in \mathbb{R}$ is small enough and $f_k$ denotes the $k^{\text{th}}$ element of the canonical basis of $\mathbb{R}^n$, combined with estimates \cite[Theorem 1 -- (4)]{cabralmelo}. Therefore, the infinitesimal generators of the (not everywhere strongly continuous) adjoint representation $V \colon y \longmapsto T_y \, (\, \cdot \,) \, T_{-y}$ restrict to the $*$-algebra of smooth operators $C^\infty(V)$ as operators of the form \begin{equation*}\text{Op}(a_j) \longmapsto \text{Op}\left(\sum_{1 \leq k \leq n} c_k \, \partial_k a_j\right),\end{equation*} $c_k \in \mathbb{R}$, and are all generators of $\Gamma_\infty$-groups, where $\Gamma_\infty := \left\{\|\, \cdot \,\|_n: n \in \mathbb{N}\right\}$ with $\|\, \cdot \,\|_0 = \|\, \cdot \,\|$ being the usual operator norm. Actually, they are generators of $\Gamma_\infty$-groups of bounded type, since $V$ has type 0 and the range of $\partial V$ is an abelian Lie algebra: more precisely, by Theorem \ref{examples} they are generators of $\Gamma_\infty$-isometric groups, since $y \longmapsto T_y \, (\, \cdot \,) \, T_{-y}$ is an isometric representation on $\mathcal{L}(L^2(\mathbb{T}^n))$. Moreover, their squares are generators of $\Gamma_\infty$-contractive semigroups. As a consequence of \cite[Theorem 3.3]{babalola} and the calculations above, the resolvent operator of $\partial V(X)$ exists for $\lambda \in \mathbb{C} \backslash i \, \mathbb{R}$ and belongs to $\mathcal{L}_{\Gamma_\infty}(C^\infty(V))$, for every fixed $X$ in the Lie algebra of $\mathbb{T}^n$, with \begin{equation} \label{resolventtorus} \|\text{R}(\lambda, \partial V(X))A\|_n \leq \frac{1}{|\text{Re }\lambda|} \, \|A\|_n, \qquad \text{Re }\lambda \neq 0, \, n \in \mathbb{N}, \, A \in C^\infty(V).\end{equation} Another interesting fact is that the analytic vectors with respect to the representation $y \longmapsto T_y \, (\, \cdot \,) \, T_{-y}$, characterized in \cite[Theorem 3]{cabralmelo} as a certain algebra of pseudodifferential operators acting on $L^2(\mathbb{T}^n)$, form a $\tau_\infty$-dense $*$-subalgebra of $C^\infty(V)$ consisting of $\tau_\infty$-projective analytic vectors for the operators $\text{Op}(a_j) \longmapsto \text{Op}\left(\sum_{1 \leq k \leq n} c_k \, \partial_k a_j\right)$, $c_k \in \mathbb{R}$, where $(a_j)_{j \in \mathbb{Z}^n}$ satisfies $a_j \in C^\infty(\mathbb{T}^n)$ and \begin{equation*}\sup \left\{|\partial^\alpha a_j(x)|; j \in \mathbb{Z}^n, x \in \mathbb{T}^n\right\}<\infty,\end{equation*} for every multiindex $\alpha \in \mathbb{N}^n$.\footnote{A characterization of the smooth vectors for the circle was first obtained in \cite{toscano}, which is a discrete version of a characterization obtained in \cite{cordes2} -- see also \cite[Chapter 8]{cordes}.}

An analogous application can be given for the canonical (not everywhere strongly continuous) action $\theta \colon \begin{bmatrix}
1 & \textbf{a}^T & c\\
0 & I_n & \textbf{b}\\
0 & 0 & 1
\end{bmatrix} \longmapsto U_{\textbf{a}, \textbf{b}, c} \, (\, \cdot \,) \, (U_{\textbf{a}, \textbf{b}, c})^{-1}$ of the Heisenberg group $H_{2n + 1}(\mathbb{R})$ on the C$^*$-algebra of bounded operators $\mathcal{L}(L^2(\mathbb{R}^n))$: H.O. Cordes proved in \cite{cordes2} that a bounded linear operator $A$ on $L^2(\mathbb{R}^n)$ is such that \begin{equation*} H_{2n + 1}(\mathbb{R}) \ni \begin{bmatrix}
1 & \textbf{a}^T & c\\
0 & I_n & \textbf{b}\\
0 & 0 & 1
\end{bmatrix} \longmapsto U_{\textbf{a}, \textbf{b}, c} \, A \, (U_{\textbf{a}, \textbf{b}, c})^{-1} \in \mathcal{L}(L^2(\mathbb{R}^n))\end{equation*} is a smooth function with values in the C$^*$-algebra of all bounded operators on $L^2(\mathbb{R}^n)$ if, and only if, there exists $a \in C^\infty(\mathbb{R}^{2n})$, bounded and with all its partial derivatives also bounded -- denote this space by $\mathcal{C}\mathcal{B}^\infty(\mathbb{R}^{2n})$ -- such that, for all $u \in \mathcal{S}(\mathbb{R}^n)$ and all $x \in \mathbb{R}^n$, one has \begin{equation*}Au(x) = \frac{1}{(2 \pi)^n} \int_{\mathbb{R}^n} e^{i x \cdot \xi} \, a(x, \xi) \, \widehat{u}(\xi) \, d\xi, \quad \text{with} \quad \widehat{u}(\xi) := \int_{\mathbb{R}^n} e^{-i s \cdot \xi} \, u(s) \, ds.\end{equation*} In this case, such $A$ is denoted by $a(x, D)$. Therefore, the operators sending these $a(x, D)$ to $\left(\sum_{k = 1}^{2n} c_k \, \partial_k a\right)(x, D)$, $c_k \in \mathbb{R}$, are generators of $\Gamma_\infty$-groups of bounded type on $C^\infty(\theta)$, since $\theta$ has type 0 and the range of $\partial \theta$ is an abelian Lie algebra (to prove that the first $2n$ canonical directions give rise to generators which act on $a(x, D)$ via a partial differentiation $\partial_k$ on the symbol of $a(x, D)$, $1 \leq k \leq 2n$, adapt the strategy employed above, in the case of the torus, but using the estimates of the Calder\'on-Vaillancourt Theorem \cite{calderon}; note that the $(2n + 1)^{\text{th}}$ direction gives the zero operator as a generator). Just as in the previous example, they are generators of $\Gamma_\infty$-isometric groups, since $\theta$ is an isometric representation on $\mathcal{L}(L^2(\mathbb{R}^n))$. Moreover, their resolvent operators exist for $\lambda \in \mathbb{C} \backslash i \, \mathbb{R}$, belong to $\mathcal{L}_{\Gamma_\infty}(C^\infty(\theta))$ and satisfy estimates which are analogous to (\ref{resolventtorus}), thus complementing the statement of \cite[Proposition 4.2, page 262]{cordes}. Note, however, that the constants appearing in our estimates are independent of the dimension of the subjacent Euclidean space $\mathbb{R}^n$.

Following \cite{cordes}, denote $C^\infty(\theta)$ by $\Psi GT$ and equip the set \begin{equation*}gl := \left\{(g, \textbf{a}, \textbf{b}, c): g \in GL(\mathbb{R}^n), \textbf{a}, \textbf{b} \in \mathbb{R}^n, c \in \mathbb{R}/(2\pi \mathbb{Z})\right\}\end{equation*} with a Lie group structure, as in \cite[page 265]{cordes}. If $\mathcal{T}$ is the unitary representation of $gl$ on $L^2(\mathbb{R}^n)$ defined by \begin{equation*}(\mathcal{T}_{g, \textbf{a}, \textbf{b}, c} \, u)(x) := |\text{det }g|^{1/2} e^{ic} e^{i \langle \textbf{b}, x \rangle} u(gx + \textbf{a}),\end{equation*} then the $*$-algebra of smooth vectors for the adjoint representation \begin{equation*} (g, \textbf{a}, \textbf{b}, c) \longmapsto \mathcal{T}_{g, \textbf{a}, \textbf{b}, c} \,(\, \cdot \,)\, (\mathcal{T}_{g, \textbf{a}, \textbf{b}, c})^{-1}, \end{equation*} denoted by $\Psi GL$, consists precisely of the elements $a(x, D)$ in $\Psi GT$ such that their symbols $a \in \mathcal{C}\mathcal{B}^\infty(\mathbb{R}^{2n})$ remain in $\mathcal{C}\mathcal{B}^\infty(\mathbb{R}^{2n})$ after any finite number of applications of the operators \begin{equation*}\epsilon_{jl} := \xi_j \partial_{\xi_l} - x_l \partial_{x_j}, \qquad j, l = 1, \ldots n, \end{equation*} by \cite[Theorem 5.3, page 269]{cordes}. Moreover, the operators \begin{equation*}a(x, D) \longmapsto \left(\sum_{1 \leq k \leq 2n} c_k \, \partial_k a + \sum_{1 \leq j,l \leq n} d_{jl} \, \epsilon_{jl} \, a\right)(x, D), \qquad c_k, d_{jl} \in \mathbb{R},\end{equation*} on $\Psi GL$ are generators of $\Gamma_\infty$-groups, and their squares generate $\Gamma_\infty$-semigroups. A similar result may be obtained if one substitutes $gl$ by the subgroup \begin{equation*}gs := \left\{(\sigma Q, \textbf{a}, \textbf{b}, c): \sigma > 0, Q \in SO(n), \textbf{a}, \textbf{b} \in \mathbb{R}^n, c \in \mathbb{R}/(2\pi \mathbb{Z})\right\}\end{equation*} and considers the subsequent space of smooth vectors $\Psi GS$ -- see \cite[pages 264--266]{cordes} and \cite[Theorem 5.4, page 269]{cordes}. Note that $\Psi GL \subseteq \Psi GS \subseteq \Psi GT$.\footnote{The author would like to thank Professor Severino T. Melo for suggesting the study of the examples $\Psi GL$ and $\Psi GS$.}

\bibliographystyle{amsplain}

\end{document}